\numberwithin{equation}{section}
\numberwithin{figure}{section}
  \theoremstyle{definition}
  \newtheorem{defn}{\protect\definitionname}
\theoremstyle{plain}
\newtheorem{thm}{\protect\theoremname}
  \theoremstyle{definition}
\newtheorem{problem}{\protect\problemname}
  \theoremstyle{plain}
\newtheorem{lem}{\protect\lemmaname}
  \theoremstyle{remark}
\newtheorem{claim}{\protect\claimname}
\newtheorem*{corollary*}{Corollary}
\newcommand{\dom}{\mathrm{dom }\xspace}
\newcommand{\im}{\mathrm{im }\xspace}
\providecommand{\claimname}{Claim}
\providecommand{\definitionname}{Definition}
\providecommand{\lemmaname}{Lemma}
\providecommand{\problemname}{Problem}
\providecommand{\theoremname}{Theorem}
  \providecommand{\claimname}{Claim}
  \providecommand{\definitionname}{Definition}
  \providecommand{\lemmaname}{Lemma}
  \providecommand{\problemname}{Problem}
\providecommand{\theoremname}{Theorem}
\begin{document}
%\global\long\def\Image{\mathrm{Im}}
%
%\global\long\def\ImageOf#1#2{#1\left[#2\right]}
%
%\global\long\def\Inverseimage#1#2{#1^{-1}\left[#2\right]}
\begin{abstract}
We construct new models of $ZF$ with an uncountable set of reals that
has a unique condensation point. This addresses a question by Sierpi\'{n}ski
from 1918. 
\end{abstract}

\author{Eilon Bilinsky\\
 Tel Aviv University}

\title{On uncountable strongly concentrated sets of reals}

\maketitle
\section{Introduction}

The real line is one of the most basic concepts in modern mathematics.
In particular, questions about the topological properties of $\mathbb{R}$
were studied extensively since the late $19^{\mathrm{th}}$ century
and the beginning of the $20^{\mathrm{th}}$ century. 
Some of those basic questions were settled easily using Choice principles, but remain
difficult in the absence of choice.
In the $20^{\mathrm{th}}$ century, in view of the independence phenomena in $\mathrm{ZF}$, people start asking about what knowledge the standard theories give us, in particular with respect to those questions about $\mathbb{R}$.

In this paper we focus on questions related to the existence of condensation
points of large subsets of the real line. Intuitively, since $\mathbb{R}$
is separable and complete, one might expect that any uncountable subset
of the real line will have more then one condensation points. Indeed, assuming
the Axiom of Choice, this is
provable.

In \cite{Sierpinski}, Sierpi\'{n}ski asked whether some from of Choice
is indeed required (see Problem \ref{problem} for exact formulation).
This question can be reformulated as a question about concentrated
sets. An uncountable set $X\subseteq\mathbb{R}$ has a single condensation
point if and only if it is concentrated on a singleton.

The goal of this paper is to give additional examples for models of $ZF$ which provide a positive answer for Sierpi\'{n}ski's question. 
Other models in which there are large bounded sets of reals which are concentrated on a single point can be also obtained using 
the Feferman-Levy method \cite{FefermanLevy1963}, 
or a variation of Cohen's method, \cite{Cohen1963}. 

In all these models there exists a bounded subset  $X\subseteq \mathbb{R}$ such that, in the model, $X$ has a unique condensation point. We will give two methods for obtaining a model in which such
a set exists. In the first method $X$ is well orderable (and therefore
by Lemma \ref{lem: singular aleph1}, $\aleph_{1}$ is singular)
and in the second method $\aleph_{1}$ is regular. Moreover, the models which are 
obtained in the second method are closer (in some sense) to the model of choice we start with. 

The structure of the paper is as follows. In Section \ref{section: preliminaries},
we will review some basic concepts and theorems which are relevant
for the question. In Section \ref{section: wo}, we will show that well orderable large 
strongly concentrated sets of reals exist in some of the Feferman-L\'{e}vy models. 
In Section \ref{section: regular aleph1} we will construct a model of $ZF$ in 
which there is a large strongly concentrated set of reals and $\aleph_1$ is regular. 

We work in $ZF$ and mention any use of the axiom of choice. Our notations are mostly standard. For basic facts about forcing and models with atoms (models of $ZFA$) we refer the reader to \cite{JechSetTheory}. 

\section{Acknowledgments}
I would like to thank
Lior Shalom, Michal Amir, Limor Friedman, Itamar Rosenfeld Rauch, Oren Yakir, Karina Samvelyan, Dor Elboim, Leonid Vishnevsky, Elad Zelingher, Peleg Michaeli, Ofir Gorodetsky, Asaf Cohen, Eyal Kaplan and Tom Benhamou for their help in the technical issues.
I would like to thank Heike Mildenberger for her remarks and encouragement.
I would like to thank William Chen, Assaf Rinot for reviewing a draft the paper. 
I would like to thank Asaf Kargila for pointing me to Sierpi\'{n}ski question. 
I would like to thank Yair Hayut for his help in the technical issues and for improving the style of the paper.
I would like to thank Moti Gitik for his guidance and  specific for his important suggestions.

Finally, I would like to thank the anonymous referee for their thorough reading of the paper and their critical suggestions that improved this paper significantly. 
In particular, their report broadened my historical and mathematical view and pointed me to some crucial issues regarding the topics of this paper, which I was not aware of.
\section{Preliminaries}
\label{section: preliminaries}
\begin{defn} 
Let us define a class function $\alpha\mapsto\aleph_\alpha$ by: 
For all ordinal $\alpha$, let us define $\aleph_\alpha$ to be the cardinal of the set of all ordinals
such that their cardinality is finite or equal to some $\aleph_\beta$ for some $\beta<\alpha$.	
\end{defn}
\begin{defn}
For all ordinal $\alpha$ let us define	
$\beth_\alpha=|V_{\omega+\alpha}|$.	
\end{defn}
\begin{defn}
For an ordinal $\alpha$ we define	
$\mathrm{cf}\aleph_\alpha$ to be the minimal $\aleph_\beta$ such that there exists a set $A$ of sets, such that the cardinality of each set in $A$ is less than $\aleph_\alpha$, $|A|=\aleph_\beta$ and $|\bigcup A|=\aleph_\alpha$. 
\end{defn}
For every $\aleph_\alpha$, $\mathrm{cf}(\aleph_\alpha)$ exists and $\mathrm{cf}(\aleph_\alpha)\le \aleph_\alpha$.

\begin{defn}
A singular cardinal is $\aleph_\alpha$ in which $\mathrm{cf}(\aleph_\alpha)< \aleph_\alpha$.
\end{defn}  
\begin{defn}
A regular cardinal is $\aleph_\alpha$ in which $\mathrm{cf}(\aleph_\alpha)= \aleph_\alpha$.
\end{defn}
The claim ``$\aleph_1$ is a regular cardinal'' is provable by the axiom of choice (\cite[Form 34]{TheDictionary}).  
\begin{defn}
For all set $X$ and an ordinal $\alpha$ let us define:
	
$P_{\aleph_\alpha}(X)=\{Y\subseteq X\mid |Y|<\aleph_\alpha\}$.
\end{defn}

\begin{defn}
D-infinite set is a set $A$ in which exist some $B\subset A$ ($B \ne A$)
such that $|B|=|A|$. 	
\end{defn}

\begin{defn}
D-finite set is a set $A$ such that $A$ is not D-infinite set
\end{defn}

A set $A$ is D-finite if and only if not exist an injection $f\colon\omega\to S$, namely $A$ has no infinite countable subset.

\begin{defn}
The axiom $CUT (\mathbb{R})$ is the axiom that for every set $A$
if $|A|=\aleph_0$ and every element in $A$ is a countable subset of $\mathbb{R}$ then $\bigcup A$ is countable.
\end{defn}

During this paper we will use the following convention:
\begin{defn}
A set $A$ is \emph{large} if and only if $A$ is not finite and not countable.
A set $A$ is \emph{uncountable} if and only if $A$ is large and D-infinite. 
\end{defn}

\begin{defn}
Let $A\subseteq\mathbb{R},r\in\mathbb{R}$. Then $r$ is a condensation
point of $A$ if and only if for every neighborhood $U$ of $r$ ,
$A\cap U$ is large. 
\end{defn}
The following classic definition is due to Besicovitch: 
\begin{defn}[Besicovitch, \cite{Besicovitch}]
A set $A\subseteq\mathbb{R}$ is \emph{concentrated} on a set $D\subseteq\mathbb{R}$
if and only if for every neighborhood $G$ of $D$, $\left|A\setminus G\right|\leq\aleph_{0}$. 
\end{defn}

The following definition will be central in this paper: 
\begin{defn}
A set $A\subseteq\mathbb{R}$ is \emph{strongly concentrated} if and
only if there is $c\in\mathbb{R}$ such that $A$ is concentrated
on the singleton $\{c\}$. 
\end{defn}

The following theorem is classical:
\begin{thm}[Existence of a condensation point]
\label{thm:Existence-of-a condensation point} For every bounded
$A\subseteq\mathbb{R}$, if $A$ is large then $A$ has at least one
condensation point. 
\end{thm}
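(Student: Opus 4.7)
The plan is a standard bisection argument, implemented so that no form of the axiom of choice is invoked. Fix a closed bounded interval $I_{0}=[a,b]$ with $A\subseteq I_{0}$. I would recursively construct a nested decreasing sequence $I_{0}\supseteq I_{1}\supseteq I_{2}\supseteq\cdots$ of closed intervals with $|I_{n}|=(b-a)/2^{n}$ such that $A\cap I_{n}$ is large for every $n$. The nested interval property of $\mathbb{R}$, which holds in ZF, then produces a unique point $c\in\bigcap_{n}I_{n}$, and this $c$ will be the desired condensation point.

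For the recursive step, given $I_{n}=[a_{n},b_{n}]$, set $c_{n}=(a_{n}+b_{n})/2$ and write
$$A\cap I_{n}\;=\;L_{n}\sqcup R_{n},\qquad L_{n}:=A\cap[a_{n},c_{n}],\quad R_{n}:=A\cap(c_{n},b_{n}].$$
The ZF-safe ingredient is that a union of two countable sets is countable: given injections $f\colon L_{n}\to\omega$ and $g\colon R_{n}\to\omega$, the assignment $y\mapsto 2f(y)$ on $L_{n}$ together with $z\mapsto 2g(z)+1$ on $R_{n}$ is an injection $L_{n}\cup R_{n}\to\omega$, with no appeal to choice. Since $A\cap I_{n}$ is large, in particular not countable, at least one of $L_{n},R_{n}$ must be non-countable, and hence (being also non-finite) large. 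I would then define $I_{n+1}=[a_{n},c_{n}]$ whenever $L_{n}$ is large, and $I_{n+1}=[c_{n},b_{n}]$ otherwise. Because this rule is definable directly from $L_{n}$ and $R_{n}$, the entire recursion takes place inside ZF.

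Finally, for any open neighborhood $U$ of $c$ there is some $n$ with $I_{n}\subseteq U$, so $A\cap U\supseteq A\cap I_{n}$ is large, showing that $c$ is a condensation point of $A$.

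The only point where one might worry in the absence of choice is in selecting a large half-interval at each of the infinitely many stages of the recursion; this is precisely what the explicit ``left-first'' rule circumvents, so neither countable choice nor dependent choice is required. The argument thus goes through verbatim in ZF.
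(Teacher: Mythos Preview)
Your argument is correct and is essentially identical to the paper's own proof: both run the standard bisection on a closed interval containing $A$, choosing the left half whenever $A$ meets it in a large set and the right half otherwise, and then take the common point of the nested intervals as the condensation point. The only difference is that you spell out explicitly why the union of two countable sets is countable in ZF and why the ``left-first'' rule avoids any appeal to choice, whereas the paper leaves these points implicit.
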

Note that the proof does not use the Axiom of Choice. 
\begin{proof}
Since $A$ is bounded, there exist $a,b\in\mathbb{R}$ such that $A\subseteq\left[a,b\right]$..
Let us define the following two sequences $a_{n},b_{n}$: 
\begin{itemize}
\item $a_{0}=a$, $b_{0}=b$ 
\item $c_{n}=\frac{a_{n}+b_{n}}{2}$ 
\item If $A\cap\left[a_{n},c_{n}\right]$ is large, $a_{n+1}=a_{n}$, $b_{n+1}=c_{n}$.
Otherwise, $a_{n+1}=c_{n}$, $b_{n+1}=b_{n}$, 
\end{itemize}
Observe that for every $n\in\mathbb{N},a_{n}<b_{n}$, $A\cap\left[a_{n},b_{n}\right]$
is large. Also note that 
\[
b_{n+1}-a_{n+1}=\frac{1}{2}\left(b_{n}-a_{n}\right)
\]
and in particular, 
\[
\lim_{n\to\infty}\left(b_{n}-a_{n}\right)=0.
\]
Thus, from Cantor's lemma, there exists a unique point $c\in\bigcap_{n\in\mathbb{N}}\left[a_{n},b_{n}\right]$.

Let us claim that $c$ is a condensation point of $A$. Indeed, for
every neighborhood $U$ of $c$ there exists $n\in\mathbb{N}$ such
that $\left[a_{n},b_{n}\right]\subseteq U$ , and since $A\cap\left[a_{n},b_{n}\right]$
is large, the claim follows. 
\end{proof}
\begin{thm}[{Sierpi\'{n}ski, \cite{Sierpinski},\cite{Moore},\cite[Form 6]{TheDictionary}}]
\label{thm:from choice-exist 2 condensation points}
The following are equivalent:
\begin{enumerate}
\item $CUT (\mathbb{R})$.
\item Every large and bounded subset of $\mathbb{R}$ has at least two condensation points (equivalently, every strongly concentrated set is countable).
\item Every large subset of $\mathbb{R}$ has a condensation point.
\item For all $A\subseteq \mathbb{R}^n$ if $A\cap B$ is countable for every bounded $B\subseteq \mathbb{R}^n$, then $A$ is countable.
\end{enumerate}
\end{thm}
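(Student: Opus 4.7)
The plan is to prove the equivalences by a cycle $(1)\Rightarrow(2)\Rightarrow(3)\Rightarrow(1)$ together with the side-equivalence $(1)\Leftrightarrow(4)$. Two constructions will recur throughout. The first, \emph{squeezing}, takes a countable family $\{C_n\}_{n\in\omega}$ of countable subsets of $\mathbb{R}$ and fixes injections $C_n\to(1/(n+2),1/(n+1))$, producing a bounded set strongly concentrated on $0$. The second, \emph{spreading}, uses fixed injections $g_n\colon C_n\to(n,n+1)$, producing an unbounded set $D$ whose intersection with any bounded interval is a finite union of countables, and hence countable outright in $\mathrm{ZF}$ without invoking $CUT(\mathbb{R})$.

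For $(1)\Rightarrow(2)$, let $A\subseteq[a,b]$ be large and suppose, for contradiction, that it has a unique condensation point $c$ (which exists by Theorem~\ref{thm:Existence-of-a condensation point}). For every $n\geq 1$, the set $A_n:=A\setminus(c-1/n,c+1/n)$ is bounded and cannot be large: otherwise Theorem~\ref{thm:Existence-of-a condensation point} applied to $A_n$ would yield a condensation point of $A_n$, which is also a condensation point of $A$ and lies outside $(c-1/n,c+1/n)$, contradicting the uniqueness of $c$. Hence each $A_n$ is countable, and applying $CUT(\mathbb{R})$ to $\{A_n\}_{n\geq 1}$ shows $A\setminus\{c\}=\bigcup_n A_n$ is countable, so $A$ itself is countable, contradicting that $A$ is large.

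For $(2)\Rightarrow(3)$, given a large $A\subseteq\mathbb{R}$, fix a homeomorphism $h\colon\mathbb{R}\to(0,1)$ and apply $(2)$ to the large bounded image $h(A)$. If some condensation point of $h(A)$ lies in $(0,1)$, then $h^{-1}$ of it is a condensation point of $A$. Otherwise the two guaranteed condensation points of $h(A)$ must be $0$ and $1$; but then the condensation points of $h(A)\cap[0,1/2]$ are contained in $\{0\}$, so by $(2)$ this set is not large, and similarly $h(A)\cap[1/2,1]$ is not large, making $h(A)$ a union of two countable sets --- countable in $\mathrm{ZF}$ --- contradicting its largeness. For $(3)\Rightarrow(1)$, start with a countable family $\{C_n\}$ of countable subsets of $\mathbb{R}$, apply the spreading construction to get $D=\bigcup_n g_n(C_n)$, and observe that $D$ has no condensation point, since every bounded neighborhood meets $D$ in a finite union of countables. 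By $(3)$, $D$ is countable, and the map $x\mapsto g_{\min\{n:x\in C_n\}}(x)$ embeds $\bigcup_n C_n$ into $D$, so the union is countable.

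For $(1)\Leftrightarrow(4)$, the forward implication covers $\mathbb{R}^n$ by the cubes $[-k,k]^n$ and writes $A=\bigcup_k(A\cap[-k,k]^n)$ as a countable union of countable sets, applying $CUT(\mathbb{R})$ transported along an explicit bijection $\mathbb{R}^n\to\mathbb{R}$. For the converse, apply $(4)$ with dimension $1$ to the same spreading set $D$ and conclude as before. The main delicate point throughout is to avoid implicit use of countable choice: a finite union of countable subsets of $\mathbb{R}$ is countable in $\mathrm{ZF}$ by interleaving the chosen enumerations, but a countable union of countables is precisely the content of $CUT(\mathbb{R})$, so each such union in the argument must be accounted for either by the finiteness of its index or by an already-established instance of $(1)$.
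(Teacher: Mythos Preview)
Your argument is correct and follows the same cycle $(1)\Rightarrow(2)\Rightarrow(3)\Rightarrow(1)$ as the paper, with the same underlying mechanisms in each implication (Theorem~\ref{thm:Existence-of-a condensation point} to produce condensation points, and translating countable pieces into disjoint unit intervals to kill condensation points). The only differences are organizational: you argue $(2)\Rightarrow(3)$ directly via a homeomorphism onto $(0,1)$ and a case split, where the paper runs the contrapositive through an order-isomorphism, and you supply an explicit proof of $(1)\Leftrightarrow(4)$ where the paper simply cites \cite{equivalent_principle_on_real_line}; incidentally, the ``squeezing'' construction you announce at the outset never actually gets used.
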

\begin{proof}
The equivalence 1 $\iff$ 4 holds by Theorem 5 in \cite{equivalent_principle_on_real_line}.

1 $\implies$ 2:

Suppose that
any union of countably many countable sets of real numbers is countable.
Let $A\subseteq\mathbb{R}$, be a large and bounded set. 
From Theorem \ref{thm:Existence-of-a condensation point} it follows
that there is $c\in\mathbb{R}$ which is a condensation point of $A$.
Let $\left(a_{n}\right)_{n\in\mathbb{N}}$, $\left(b_{n}\right)_{n\in\mathbb{N}}$
be sequences of real numbers, such that $a_{n}$ is strictly increasing
and $\lim_{n\to\infty}a_{n}=c$, and $b_{n}$ is strictly decreasing
and $\lim_{n\to\infty}b_{n}=c$.

If for every $n\in\mathbb{N}$ we have that $A\cap\left[a_{n},a_{n+1}\right]$
and $A\cap\left[b_{n+1},b_{n}\right]$ are both not large then $A$ is
the union of at most countably many sets of reals, each one of them
is at most countable, and therefore, $A$ is at most countable, a
contradiction to the assumption. Thus, there exists a natural number
$n$ such that $A\cap\left(\left[a_{n},a_{n+1}\right]\cup\left[b_{n+1},b_{n}\right]\right)$
is large. Thus, by Theorem \ref{thm:Existence-of-a condensation point}
there exists $c'\in\mathbb{R}$ which is a condensation point of $A\cap\left(\left[a_{n},a_{n+1}\right]\cup\left[b_{n+1},b_{n}\right]\right)$
and in particular of $A$. $c'\neq c$ because $c'\in[a_n,a_{n+1}]\cup[b_n+1,b_n]$ and  $c\notin[a_n,a_{n+1}]\cup[b_n+1,b_n]$.

2 $\implies$ 3:

We prove that the negation of 3 implies the negation of 2. 

Let $A^\star \subset \mathbb{R}$ a large set with no condensation point.
By Theorem \ref{thm:Existence-of-a condensation point} for every $a<b\in \mathbb{R}$ the set $\{x\in A^\star\mid a<x<y\}$ is not large.
Let us define $A=\{|x|\mid x\in A^\star\}$
$A$ is large with no condensation point, and every element in $A$ is  bigger then $-1$.
There is a function $f\colon \mathbb{R} \to \{x\in \mathbb{R}\mid 0<x<1\}$ which is an order isomorphism.
Let us define $B=\{y\in\mathbb{R}\mid 0<y<1,\exists x\in A,f(x)=y\}$.
$B$ is bounded.
$B$ is a large set because $f$ is bijection and thus $|B|=|A|$.
For every $r\in\mathbb{R}$ if $r\neq1$ then $r$ is not a condensation point of $B$
because for every $D\subseteq\mathbb{R}$ if $D$ is closed and $1\in \mathbb{R}\setminus D$
then $\{x\in A\mid f(x)\in D\}$ is not large set.

3 $\implies$ 1:  

We prove that the negation of 1 implies the negation of 3.

We assume there is an uncountable subset of $\mathbb{R}$ which this set is a result of a countable union of  countable sets. $|\mathbb{R}|=|\{x\in \mathbb{R}\mid 0<x<1\}|$ therefore there exists an uncountable set $A\subseteq\{x\in\mathbb{R}\mid 0<x<1\}$ and a sequence of pairwise-disjoint and countable sets $\langle A_n\rangle_{n\in\omega}$ such that $A=\bigcup_{n\in\omega} A_n$. For all $n\in\omega$ we define $B_n = \{x\in\mathbb{R}\mid x-n\in A_n\}$.
$|B_n|=|A_n|=\aleph_0$. Let us define $B=\bigcup_{n\in\omega}B_n$.
$|B|=|A|$ and therefore $B$ is uncountable. $B$ has no condensation points because every bounded subset of $B$ is either finite or countable.

\end{proof}
By Theorem \ref{thm:from choice-exist 2 condensation points}, $ZFC$ proves that any strongly concentrated set of reals is at most countable.

\begin{problem}
\label{problem} (Sierpi\'{n}ski)\cite{Sierpinski} Is it true that
one cannot prove, without using choice, that every bounded and large
set $A\subseteq\mathbb{R}$, has at least two condensation points?

In this paper we interpret this question as follows:

Does $ZF$ prove that every large and bounded set $A\subseteq\mathbb{R}$,
has at least two condensation points? Equivalently, does $ZF$ prove
that any strongly concentrated set of reals is at most countable?
\end{problem}

In the standard examples of failure of $CUT(\mathbb{R})$ such as the Feferman-Levy model (\cite{FefermanLevy1963}), the obtained
strongly concentrated set of reals is not well orderable. Yair Hayut asked the following: 
\begin{problem}
\label{prob:with well order}Is it true that one cannot
prove in $ZF$ that every bounded, well orderable and large set $A\subseteq\mathbb{R}$,
has at least two condensation points? 
\end{problem}
We will isolate two models of $ZF$. In both models there is a large
bounded subset of $\mathbb{R}$ with a unique condensation point.
In the first one, this set is well orderable, and in the second one
$\aleph_{1}$ is regular.

\section{Well ordered large strongly concentrated sets}\label{section: wo}

In this section we will show that there is a large well orderable strongly concentrated
set of reals if and only if $\aleph_{1}$ is singular and there is
an injection of $\aleph_{1}$ into the reals. 
\begin{thm}
\label{thm:well orderable large set to singular}
The following are equivalent:

\begin{itemize}
\item There is a well orderable strongly concentrated  set of real numbers.
\item $\aleph_1 < 2^{\aleph_0}$ (exist a one to one function from $\omega_1$ to $\mathbb{R}$) and $\mathrm{cf}\ (\aleph_1)=\aleph_0$.	
\end{itemize}	
\end{thm}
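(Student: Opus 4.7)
The plan is to prove both directions separately, working throughout in $\mathrm{ZF}$. I read the theorem as implicitly requiring the strongly concentrated set to be large, since otherwise the empty set would be a trivial witness to the left-hand side.

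For the forward direction, assume $A \subseteq \mathbb{R}$ is large, well-orderable, and concentrated on a singleton $\{c\}$. Since $A$ is well-orderable and not countable, $|A| = \aleph_\alpha$ for some $\alpha \geq 1$, and restricting the well-order to an initial segment of length $\omega_1$ produces an injection $\omega_1 \hookrightarrow A \subseteq \mathbb{R}$, giving $\aleph_1 \leq 2^{\aleph_0}$. For each positive integer $n$, the set $B_n := A \setminus (c - 1/n,\, c + 1/n)$ is at most countable by concentration, and since $\bigcap_n (c - 1/n,\, c + 1/n) = \{c\}$ we get $A \setminus \{c\} = \bigcup_{n \in \omega} B_n$. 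Let $A' \subseteq A$ be the initial segment of length $\omega_1$ in the well-order of $A$; then $A' \setminus \{c\} = \bigcup_n (A' \cap B_n)$ expresses a set of cardinality $\aleph_1$ as a countable union of countable sets, so the paper's definition of cofinality forces $\mathrm{cf}(\aleph_1) \leq \aleph_0$ and hence $\mathrm{cf}(\aleph_1) = \aleph_0$.

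For the backward direction, fix an injection $f \colon \omega_1 \to \mathbb{R}$ and, using $\mathrm{cf}(\aleph_1) = \aleph_0$, fix a countable family of countable sets whose union has cardinality $\aleph_1$; transporting via a bijection onto $\omega_1$ yields countable sets $A_n \subseteq \omega_1$ with $\bigcup_{n \in \omega} A_n = \omega_1$, and refining to $B_n := A_n \setminus \bigcup_{k < n} A_k$ gives a partition of $\omega_1$ into countable pieces. Fix once and for all a homeomorphism $\phi \colon \mathbb{R} \to (0,1)$, e.g.\ $\phi(x) = \tfrac{1}{2} + \tfrac{1}{\pi} \arctan x$, and define $g \colon \omega_1 \to \mathbb{R}$ by
\[
g(\alpha) \;=\; \frac{1}{n+2} + \phi(f(\alpha)) \cdot \left( \frac{1}{n+1} - \frac{1}{n+2} \right), \qquad \text{where } \alpha \in B_n.
\]
Since the intervals $\bigl(1/(n+2),\, 1/(n+1)\bigr)$ are pairwise disjoint and each $\phi \circ f$ restricted to $B_n$ is injective, $g$ is injective; its image is a bounded well-orderable subset of $\mathbb{R}$ of cardinality $\aleph_1$. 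For any $\varepsilon > 0$, choose $N$ with $1/(N+1) < \varepsilon$; then $g[\omega_1] \setminus (-\varepsilon,\, \varepsilon) \subseteq \bigcup_{n < N} g[B_n]$, a finite union of countable sets, hence countable. So $g[\omega_1]$ is strongly concentrated on $\{0\}$.

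The only real subtlety is that the definition of $\mathrm{cf}(\aleph_1)$ used here is stated in terms of arbitrary set decompositions rather than cofinal ordinal sequences, which matters in $\mathrm{ZF}$ because a countable subset of $\omega_1$ need not be bounded. The forward direction sidesteps this by feeding the decomposition $(B_n)_n$ directly into the definition; the backward direction sidesteps it by defining $g$ uniformly via the fixed $\phi$, so that no simultaneous choice of injections $B_n \hookrightarrow \bigl(1/(n+2),\, 1/(n+1)\bigr)$ is required.
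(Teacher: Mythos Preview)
Your argument is correct in both directions. The forward implication is essentially the paper's own argument: both of you decompose $A$ as $\bigcup_n B_n$ with $B_n = A \setminus (c-1/n,\,c+1/n)$ and read off $\mathrm{cf}(\aleph_1)=\aleph_0$ from the definition. The paper goes on to show $|A|=\aleph_1$ exactly (via a claim that a countable union of countable sets of ordinals has size $\le\aleph_1$), whereas you pass to an initial segment $A'$ of size $\aleph_1$ and work with that; either route suffices for the stated equivalence.

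The backward implication is where you genuinely diverge from the paper. The paper identifies $\mathbb{R}$ with ${}^{\omega}2$, extracts from the singularity hypothesis a strictly increasing cofinal map $\nu\colon\omega\to\omega_1$, defines a rank $\rho(\alpha)=\min\{n:\nu(n)>\alpha\}$, and then prepends $\rho(\alpha)$ many $1$'s followed by a $0$ to the binary string $g(\alpha)$; the resulting set concentrates on the constant sequence $1$. You instead take the decomposition $\omega_1=\bigsqcup_n B_n$ at face value and push each piece $B_n$ into the interval $(1/(n{+}2),\,1/(n{+}1))$ via the single fixed map $\phi\circ f$, so the image concentrates on $0$. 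Your construction is more elementary in that it never passes through ${}^{\omega}2$ and, more importantly, it avoids the step of producing a cofinal $\omega$-sequence in $\omega_1$ from the paper's decomposition-based notion of cofinality---a step the paper takes for granted but which, as you correctly note, is not entirely immediate in $\mathrm{ZF}$. The paper's approach, on the other hand, yields the slightly sharper picture that the rank function $\rho$ stratifies $\omega_1$ by a cofinal sequence; this is useful elsewhere but not needed for the bare equivalence.
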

The conjunction of the following two lemmas implies the theorem. 

\begin{lem}
\label{lem: singular aleph1} Assume that there is a bounded, well
orderable set $A\subseteq\mathbb{R}$, with a unique condensation point.
Then $\mathrm{cf}\left(\aleph_{1}\right)=\aleph_{0}$ and $\left|A\right|=\aleph_{1}$.
In particular, there is an injection $f\colon\omega_{1}\to\mathbb{R}$. 
\end{lem}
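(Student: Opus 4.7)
The plan is to extract both a bound on $|A|$ and a countable cofinal family in $\aleph_1$ directly from $A$. First I would argue that the uniqueness of the condensation point forces $A$ to be concentrated on $\{c\}$: for any neighborhood $G$ of $c$, any condensation point of $A\setminus G$ would also be a condensation point of $A$ distinct from $c$, so by Theorem \ref{thm:Existence-of-a condensation point} the bounded set $A\setminus G$ cannot be large, giving $|A\setminus G|\le\aleph_0$.

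Next, setting $B_n=A\setminus(c-1/n,c+1/n)$, I obtain an increasing sequence of countable sets with $A\setminus\{c\}=\bigcup_{n<\omega}B_n$. Here the well orderability of $A$ is crucial: fixing once and for all a well ordering $<_A$ of $A$, each $B_n$ is well ordered by restriction, and since $B_n$ is at most countable its order type is some ordinal $\beta_n<\omega_1$. For $x\in A\setminus\{c\}$, let $n(x)$ be the least $n$ with $x\in B_n$ and let $\rho(x)$ be the order type of $\{y\in B_{n(x)}\mid y<_A x\}$. Then $x\mapsto(n(x),\rho(x))$ is an injection from $A\setminus\{c\}$ into $\omega\times\omega_1$, so $|A|=|A\setminus\{c\}|\le\aleph_1$. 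Since $A$ is large and well orderable we also have $|A|\ge\aleph_1$, so $|A|=\aleph_1$ and we obtain the desired injection $\omega_1\to\mathbb{R}$.

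For the cofinality claim, the family $\{B_n\mid n<\omega\}$ must be infinite, since a finite union of countable sets would be countable, contradicting $|\bigcup_n B_n|=\aleph_1$; being indexed by $\omega$ it has cardinality $\aleph_0$. As each $|B_n|<\aleph_1$ and $|\bigcup_n B_n|=\aleph_1$, the definition of cofinality yields $\mathrm{cf}(\aleph_1)\le\aleph_0$, hence $\mathrm{cf}(\aleph_1)=\aleph_0$.

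The delicate point is the construction of the injection into $\omega\times\omega_1$ in ZF: it works only because one fixed well ordering $<_A$ of $A$ simultaneously well orders every $B_n$, so no independent choices of orderings are required. Dropping well orderability would break this step entirely, which is precisely why in the second construction of the paper $\aleph_1$ can remain regular.
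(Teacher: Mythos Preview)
Your proof is correct and follows the same outline as the paper: both use the decomposition $B_n=A\setminus(c-1/n,c+1/n)$, observe each $B_n$ is countable since $c$ is the unique condensation point, and exploit the fixed well ordering of $A$ to bound $|A|$ by $\aleph_1$. The only difference is packaging: the paper isolates a standalone claim that any countable union of countable sets of \emph{ordinals} has cardinality at most $\aleph_1$ (transferring $A$ to ordinals via its well ordering and then stitching the order types $\beta_n$ end to end into a single ordinal $\gamma^\star\le\omega_1$), whereas you inject $A\setminus\{c\}$ directly into $\omega\times\omega_1$ via $x\mapsto(n(x),\rho(x))$. Your route is shorter and tailored to the situation; the paper's route yields a reusable ZF lemma. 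The cofinality step and the concluding remarks about the role of well orderability match the paper's reasoning.
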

\begin{proof}
Clearly, $A$ is uncountable, because $A$ has a condensation
point. Therefore, since $A$ can be well ordered, $\left|A\right|\geq\aleph_{1}$.

Let us show that there is $B\subseteq P_{\aleph_{1}}\left(\mathbb{R}\right)$
such that $\left|B\right|=\aleph_{0}$ and $A=\bigcup B$. This is
done by imitating the proof of Theorem \ref{thm:from choice-exist 2 condensation points}.

Namely, let $c$ be the unique condensation point of $A$. Let 
\[
B_{n}=A\setminus\left(c-\frac{1}{n},c+\frac{1}{n}\right)
\]
and define $B=\left\{ B_{n}\mid n\in\mathbb{N}\setminus\left\{ 0\right\} \right\} $.
If there is a natural number $n$ such that $B_{n}$ is large, then
$B_{n}$ has a condensation point. This condensation point cannot
be $c$, since $c$ is not in the closure of $B_{n}$.

Let us use the following lemma: 
\begin{claim}
The cardinality of a countable union of countable sets of ordinals
is at most $\aleph_{1}$. 
\end{claim}
\begin{proof}
Let $B$ be a set which is a countable union of countable sets of
ordinals. We claim that $\left|B\right|\leq\aleph_{1}$. Suppose otherwise.
Let $B$ be a counterexample. Passing to the cardinality of $B$,
we can replace it by an $\aleph_{\alpha}$ with $\alpha\geq2$.

Let us fix a countable sequence of countable subsets of $\aleph_{\alpha}$,
$\langle B_{n}^{\star}\mid n\in\omega\rangle$, such that $\aleph_{\alpha}=\bigcup_{n<\omega}B_{n}^{\star}$.
We define a sequence of sets 
\[
B_{n}=B_{n}^{\star}\setminus\left(\bigcup_{k<n}B_{k}^{\star}\right)
\]
for each $n\in\omega$.

The sets $\{B_{n}\mid n\in\omega\}$ are pairwise disjoint. Set $\beta_{n}=\mathrm{otp}\left(B_{n}\right)$,
for every $n<\omega$. $\beta_{n}<\omega_{1}$, since $B_{n}$ is
countable, and therefore so is $\beta_{n}$. Define by induction a
sequence of countable ordinals $\left\langle \gamma_{n}\mid n<\omega\right\rangle $
as follows:

$\gamma_{0}=\beta_{0}$, and for all $n<\omega$, $\gamma_{n+1}$
is the least ordinal $\gamma$ such that $\mathrm{otp}\left(\gamma\setminus\gamma_{n}\right)$
has order type $\beta_{n+1}$. Clearly, for every $n<\omega$, $\gamma_{n}$
is countable and uniquely determined. Set $\gamma^{\star}=\bigcup_{n<\omega}\gamma_{n}$.
Then $\gamma^{\star}\leq\aleph_{1}$.

Let us denote by $\pi_{X,Y}$ the unique order isomorphism between
sets of ordinals $X$,$Y$.

Finally, let us define a bijection $f\colon\aleph_{\alpha}\to\gamma^{\star}$
as follows: for every $\nu<\aleph_{\alpha}$ there exists a unique
$n^{\star}$ such that $\nu\in B_{n^{\star}}$. If $n^{\star}=0$,
set $f\left(\nu\right)=\pi_{B_{0},\gamma_{0}}\left(\nu\right)$. Otherwise,
$n^{\star}=n+1$ for some $n<\omega$, set $f\left(\nu\right)=\pi_{B_{n+1},\gamma_{n+1}\setminus\gamma_{n}}$.
\end{proof}
This concludes the proof of Lemma \ref{lem: singular aleph1}. 
\end{proof}
\begin{lem}
\label{thm: aleph1 singular implies strongly consentrated set} If
$\aleph_{1}$ is singular and there is an injection $g\colon\omega_{1}\to\mathbb{R}$
then there is a bounded, well orderable, set $A\subseteq\mathbb{R}$
with a unique condensation point. 
\end{lem}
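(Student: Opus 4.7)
The plan is to embed $\omega_1$ into $\mathbb{R}$ in such a way that, after dropping any initial segment, the image accumulates at a single target point (say $0$), by pushing each piece of a countable cofinal decomposition of $\omega_1$ into a distinct shrinking strip. Using $\mathrm{cf}(\aleph_1)=\aleph_0$, fix a strictly increasing sequence $\langle\alpha_n\mid n<\omega\rangle$ of countable ordinals cofinal in $\omega_1$ with $\alpha_0=0$, and set $I_n=\alpha_{n+1}\setminus\alpha_n$, so that $\omega_1=\bigsqcup_{n<\omega}I_n$ and each $I_n$ is a countable set of ordinals. Fix a definable bijection $\phi\colon\mathbb{R}\to(0,1)$, and for each $n$ let $\psi_n\colon(0,1)\to(1/(n+2),1/(n+1))$ be the canonical increasing affine bijection.

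Next, put $f_n=\psi_n\circ\phi\circ g\restriction I_n$; this is an injection of $I_n$ into the interval $(1/(n+2),1/(n+1))$, and these intervals are pairwise disjoint, so the $f_n$'s assemble into a single injection $F\colon\omega_1\to\mathbb{R}$. Take $A=F[\omega_1]\subseteq(0,1)$. Then $A$ is bounded, the inverse of $F$ supplies a well ordering of $A$ of order type $\omega_1$, and in particular $|A|=\aleph_1$. To see that $0$ is a condensation point, note that any neighborhood $U$ of $0$ contains some $(0,1/(n+1))$, and so
\[
A\cap U\supseteq\bigcup_{k\ge n+1}f_k[I_k]=F[\omega_1\setminus\alpha_{n+1}],
\]
which is in bijection with the uncountable ordinal $\omega_1\setminus\alpha_{n+1}$ and is therefore large. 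Conversely, for any $r\ne 0$ pick a neighborhood $U$ of $r$ bounded away from $0$; then $U$ meets only finitely many of the strips $(1/(n+2),1/(n+1))$, so $A\cap U\subseteq F[\alpha_N]$ for some $N<\omega$, a set in bijection with the countable ordinal $\alpha_N$ and hence not large.

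The step requiring care is the last one: in $\mathrm{ZF}$ one cannot freely enumerate arbitrary finite or countable unions of countable sets. The construction sidesteps this because every relevant piece of $A$ is of the form $F[\beta]$ for some $\beta\le\omega_1$, so the cardinalities of $A\cap U$ and of its complement in $A$ are read off directly as the cardinalities of initial or final segments of $\omega_1$. Thus both the existence of a condensation point at $0$ and the absence of any other condensation point are reduced to elementary ordinal arithmetic, and the construction of a bounded, well orderable, strongly concentrated set goes through entirely in $\mathrm{ZF}$.
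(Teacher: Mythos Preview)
Your proof is correct and follows essentially the same strategy as the paper: use a cofinal $\omega$-sequence $\langle\alpha_n\rangle$ in $\omega_1$ to stratify $\omega_1$ into countable blocks, and then use the given injection $g$ to push each block into a strip that shrinks toward a fixed target point. The paper carries this out in $^{\omega}2$ by prepending $\rho(\alpha)$ many $1$'s and a $0$ to $g(\alpha)$ (so the target is the constant-$1$ sequence), whereas you carry it out directly in $\mathbb{R}$ via affine maps onto the intervals $(1/(n{+}2),1/(n{+}1))$ with target $0$; these are the same idea in different coordinates, and your final paragraph making explicit why the countability claims need no choice is a worthwhile clarification.
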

\begin{proof}
Identify $\mathbb{R}$ with $^{\omega}2$ . By the assumption of the
theorem, there is a function 
\[
\nu\colon\omega\to\omega_{1}
\]
such that for all $n<m$, $\nu\left(n\right)<\nu\left(m\right)$ and
$\bigcup_{n\in\mathbb{N}}\nu\left(n\right)=\omega_{1}$. We define
a function $\rho\colon\omega_{1}\to\omega$ by 
\[
\rho\left(\alpha\right)=\min\left\{ n\in\omega\mid\nu\left(n\right)>\alpha\right\} .
\]

Let $g\colon\omega_{1}\to\mathbb{R}$ be an injection. Let us define
a function $f\colon\omega_{1}\to\mathbb{R}$ by: 
\[
f\left(\alpha\right)\left(n\right)=\begin{cases}
1 & \rho\left(\alpha\right)>n\\
0 & \rho\left(\alpha\right)=n\\
g\left(\alpha\right)\left(n-\rho\left(\alpha\right)-1\right) & \rho\left(\alpha\right)<n
\end{cases}.
\]

Thus the real number $f\left(\alpha\right)$ is obtained by adding
$\rho(\alpha)$ 1-s and a single zero at the beginning of $g\left(\alpha\right)$.
$f$ is an injection since for all $\alpha\in\beta\in\omega_{1}$,
$\rho\left(\alpha\right)\leq\rho\left(\beta\right)$.

If $\rho\left(\alpha\right)<\rho\left(\beta\right)$ then 
\[
f\left(\alpha\right)\left(\rho\left(\alpha\right)\right)=0\neq1=f\left(\beta\right)\left(\rho\left(\alpha\right)\right)
\]

and if $\rho\left(\alpha\right)=\rho\left(\beta\right)$ then since
$g$ is one to one there is some $n\in\omega$ such that $g\left(\alpha\right)\left(n\right)\ne g\left(\beta\right)\left(n\right)$.

Let $A$ be $\im f$.

$A\subseteq\mathbb{R}$ is a large set (since $|A|=\aleph_1$). By Theorem \ref{thm:Existence-of-a condensation point},
$A$ has a condensation point.

For every $y\in\mathbb{R}$ if there is $n\in\omega$ such that $y\left(n\right)=0$
then there is some $\alpha\in\omega_{1}$ such that for every $\beta\in\omega_{1}$,
$\beta>\alpha$ and every $k<n+2$, $f\left(\beta\right)\left(k\right)=1$.
Thus, $y$ is not a condensation point of $A$.
\end{proof}
The assumptions of lemma \ref{thm: aleph1 singular implies strongly consentrated set}
hold in a Feferman-L\'{e}vy model. Namely, let $V$ be a well founded
model of $ZFC$ such that $\aleph_{\omega}<2^{\aleph_{0}}$ (this
can be arranged, for example, by adding $\aleph_{\omega}$ Cohen reals).
Use the Feferman-L\'{e}vy construction over $V$ (See \cite[Chapter 10]{JechAC})
to get a model $M$ of $ZF$. $M\vDash\aleph_{1}^{M}=\aleph_{\omega}^{V}$.
In $M$, there is an injection $f\colon\omega_{1}\to\mathbb{R}$ and
$\aleph_{1}$ is a singular cardinal.

\section{Large Strongly Concentrated sets with regular $\aleph_{1}$}\label{section: regular aleph1}
By the previous section, if $\aleph_{1}$ is singular and injects
into the reals, then there is a large, well-orderable and strongly concentrated set. The existence of a large strongly concentrated set is consistent with the regularity of $\aleph_{1}$.
This statement for example holds in Sageev's Model, \cite{sagiv}.
In this section we represent other way to get a model with this feature.  
One notable difference between the method which is introduced in the previous section and the method that we introduce in this section that while the method of the previous section collapse all uncountable cardinals below $\beth_\omega$ to $\aleph_0$, the current method  preserves all cardinals above $\beth_1$ as cardinals.   

Let us start with a well founded model of $ZFC$, $W$. In particular,
$\left(2^{\aleph_{0}}\right)^{+}$ is a regular cardinal in $W$.

Let $V$ be a model of $ZFA+AC$ and let $A$ be the set of all atoms
in $V$. Let us assume that $\left|A\right|>\aleph_{0}$.
\begin{defn}
Let $\mathcal{S}$ to be the group of all bijection $\pi\colon A\to A$. 
\end{defn}
\begin{defn}
For $\pi\in S$ and $x\in V\setminus A$ we define $\pi\left(x\right)$ recursively
as 
\[
\pi(x)=\left\{ \pi\left(t\right)\mid t\in x\right\} .
\]
\end{defn}
\begin{defn}
For all $x\in V$ we define 
\[
sym_{S}\left(x\right)=\left\{ \pi\in S\mid\pi\left(x\right)=x\right\} .
\]
\end{defn}
\begin{defn}
For all $C\in P_{\aleph_{1}}\left(A\right)$ we define: 
\[
\mathcal{S}_{C}=\left\{ \pi\in\mathcal{S}\mid\forall a\in C,\pi\left(a\right)=a\right\} .
\]
\end{defn}
\begin{defn}
We define 
\[
F=\left\{ H\leq S\mid\exists C\in P_{\aleph_{1}}\left(A\right),\mathcal{S}_{C}\leq H\right\} %\left\{ \pi\inS\mid\foralla\inC,\pi\left(a\right)=a\right\} \leqH
\]
$F$ is a filter of subgroups over $S$. 
\end{defn}
\begin{defn}
We define 
\[
mys=\left\{ x\in V\mid sym_{S}\left(x\right)\in F\right\} .
\]
$mys$ is the class of all symmetric elements. %    \end{defn}
 %\begin{defn}
 We define 
\[
\mathcal{B}=\left\{ x\in V\mid TC\left(x\right)\subseteq mys\right\} .
\]
$\mathcal{B}$ is the class of all hereditary symmetric elements. 
\end{defn}
By a well known theorem of Fraenkel (see \cite{JechSetTheory}) $\mathcal{B}$
is a model of $ZFA$.

%    \begin{defn}
%        For all $x\in\mathcal{B}$ and we define:
%        \[
%        H_{x}=\left\{ \pi\in\mathcal{S}\mid\pi\left(x\right)=x\right\} .
%        \]
%    \end{defn}
\begin{defn}
For all $x\in V$ we define 
\[
St\left(x\right)=\left\{ C\in P_{\aleph_{1}}\left(A\right)\mid\mathcal{S}_{C}\subseteq sym_{S}(x)\right\} .
\]
\end{defn}
%\begin{thm}
 %    $\mathcal{B}$ is a model of $ZFA$.
 %\end{thm}
 Work in $\mathcal{B}$. 
\begin{defn}
We define a forcing 
\[
Q=\left\{ h:D\to\left\{ 0,1\right\} \mid D\in P_{\aleph_{0}}\left(\omega\right)\right\} .
\]
We say that $h_{0}$ is stronger than $h_{1}$ or equal to $h_{1}$
if and only if $\dom h_{1}\subseteq\dom h_{0}$

and $\forall d\in\dom h_{1},h_{0}\left(d\right)=h_{1}\left(d\right)$. 
\end{defn}
$Q$ is essentially the Cohen forcing. 
\begin{defn}
We define 
\[
I=\left\{ h\in Q\mid\exists n\in\dom h,\ h\left(n\right)=0\right\} .
\]
\end{defn}
\begin{defn}
\label{definition: Pn} Let $n\in\omega$.

We define $P_{n}$ to be the set of all functions $f\colon A\to Q$
such that: 
\begin{enumerate}
\item For all $a\in A$, $\dom\,f\left(a\right)=n$. 
\item $f^{-1}\left(I\right)\in P_{\aleph_{1}}\left(A\right)$. 
\item \label{requirement:big community} $\forall t\colon n\to\left\{ 0,1\right\} $,
there are infinitely many $a\in A$ such that $f\left(a\right)=t$. 
\end{enumerate}
\end{defn}
\begin{defn}
We define a forcing 
\[
P=\bigcup_{n\in\omega}P_{n}.
\]

We order $P$ by: 
\[
\forall f_{0},f_{1}\in P,f_{0}\leq f_{1}\Leftrightarrow\forall a\in A,f_{0}\left(a\right)\leq_{Q}f_{1}\left(a\right).
\]
\end{defn}
\begin{defn}
For all $n\in\omega$ we define 
\[
D_{n}=\bigcup_{k\in\omega\setminus n}P_{k}.
\]
\end{defn}
\begin{defn}
For all $C\in P_{\aleph_{1}}\left(A\right)$ we define 
\[
Z_{C}=\{f\in P\mid\forall a\in C,f(a)\in I\}..
\]
\end{defn}
Let $G$ be a generic filter for $P$. 
\begin{defn}
For all $a\in A$ we define 
\[
G_{a}=\left\{ h\in Q\mid\exists f\in G,\,f(a)=h\right\} .
\]
Let us define $g_{a}=\bigcup G_{a}$. 
\end{defn}
\begin{defn}
For all $C\in P_{\aleph_{1}}(A)$, let $g_{C}^{\star}\colon C\to\ ^{\omega}\{0,1\}$,
be the function $g_{C}^{\star}(c)=g_{c}$, for every $c\in C$. Let
$g_{C}$ be a name which is forced by the weakest condition to be
$g_{C}^{\star}$. 
\end{defn}
\begin{defn}
For all $C\in P_{\aleph_{1}}(A)$ we define $Res_{C}^{\star}\colon I\to\omega\cup\{\aleph_{0}\}$
which for all $i\in I$, $Res_{C}^{\star}(i)=|\{a\in A\setminus C\mid i\in g^{\star}(a)\}|$.
Let $Res_{C}$ a name which is forced by every condition to be $Res_{C}^{\star}$. 
\end{defn}
\begin{defn}
We define a function $num\colon P\to\omega$ by 
\[
num\left(f\right)=\min\left\{ n\in\omega\mid f\in D_{n}\right\} .
\]
\end{defn}
\begin{lem}
\label{lem:colaps bet one} $2^{\aleph_{0}}$ of $\mathcal{B}$ is
countable in $\mathcal{B}\left[G\right]$. 
\end{lem}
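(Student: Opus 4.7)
I would begin by observing that $(2^{\aleph_0})^{\mathcal{B}} = (2^{\aleph_0})^V$: each real is a pure subset of $\omega \times \{0,1\}$ with no atom in its transitive closure, so it is hereditarily symmetric automatically. The task therefore reduces to constructing a surjection $\omega \to (2^{\aleph_0})^V$ inside $\mathcal{B}[G]$.

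The first step is to extract from $G$ a canonical $\omega$-indexed sequence of conditions. I would show that $G \cap P_n$ is a singleton for each $n$: any two distinct conditions in $P_n$ differ on some atom $a$, where they assign $Q$-conditions with common domain $n$ but different values, hence $Q$-incompatible, making the parent conditions $P$-incompatible. By density of $D_n$, some condition in $G$ sits at level at least $n$, and restricting it to level $n$ lands in $P_n$ and remains in $G$ by upward closure. The canonical name $\dot{f}_n = \{(\check{f}, f) : f \in P_n\}$ is stabilized by every $\pi \in \mathcal{S}$ (since $\pi$ permutes $P_n$ to itself), so $\dot{f}_n$ is hereditarily symmetric, placing $(f_n)_{n \in \omega}$ in $\mathcal{B}[G]$.

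Next, using $(f_n)_n$ and the structure of the forcing---in particular the countable-support requirement and the uniformity requirement that every length-$n$ binary pattern is taken by infinitely many atoms---I would construct the surjection. The uniformity condition is the main engine: for every $r \in (2^{\aleph_0})^V$ and every $n$, infinitely many atoms $a$ satisfy $f_n(a) = r \restriction n$. A density argument then produces a name $\tau \in \mathcal{B}$ whose interpretation in $\mathcal{B}[G]$ is a surjection $\omega \to (2^{\aleph_0})^V$, built from the $\mathcal{S}$-invariant partition data of the $f_n$'s so that $\tau$ itself has a symmetric stabilizer.

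The main obstacle is showing the constructed map covers every ground-model real. Each $g_a$ is Cohen-generic over $V$, so $\{g_a : a \in A\}$ is disjoint from $(2^{\aleph_0})^V$ in $V[G]$; the surjection cannot just match reals to $g_a$'s, and the old reals must be decoded from the combinatorial partition data of the $f_n$'s. Ensuring coverage of every $r \in (2^{\aleph_0})^V$ while keeping the decoding $\mathcal{S}_C$-invariant for some $C \in P_{\aleph_1}(A)$ is the most delicate point; it likely requires a fusion-style genericity argument that extends conditions while preserving both the countable-support and uniform-pattern requirements of the definition of $P_n$.
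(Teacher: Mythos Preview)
Your preliminary observations are correct: $(2^{\aleph_0})^{\mathcal{B}} = (2^{\aleph_0})^V$, and the sequence $(f_n)_{n\in\omega}$ with $\{f_n\} = G \cap P_n$ lives in $\mathcal{B}[G]$. But from that point on the proposal does not contain the actual mechanism, and the direction you outline---extracting a surjection from ``$\mathcal{S}_C$-invariant partition data'' via some fusion argument---would not work as stated. The $\mathcal{S}_C$-invariant content of $f_n$ is just $f_n\restriction C$ together with the multiset of values on $A\setminus C$, and by requirement~(3) of Definition~\ref{definition: Pn} that multiset is always ``every $t\in{}^n 2$ infinitely often''; no continuum-sized information is hidden there, so this cannot by itself produce a surjection onto ${}^{\omega}2\cap V$.

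The paper's argument is concrete and needs no fusion. Working in $\mathcal{B}$, one fixes in advance a sequence $\langle \ell_m\mid m\in\omega\rangle$ of injections $\omega\to A$ with pairwise disjoint images; this object has countable support (the union of the images) and therefore lies in $\mathcal{B}$. The surjection is then $\Theta(n,m)=\langle g_{\ell_m(k)}(n):k\in\omega\rangle$, the $n$-th bit of the generic reals read along the $m$-th column. Surjectivity is a one-step density lemma: for each $h\in{}^\omega 2$, the set of $f\in P$ already satisfying $f(\ell_m(k))(n)=h(k)$ for all $k$ and some $m,n$ with $n<num(f)$ is dense. Given $f^*\in P_n$, a pigeonhole on the finitely many possible values of $\mathrm{im}(f^*\circ\ell_m)\subseteq{}^n 2$ singles out a column $\ell_{m^*}$ such that every pattern $t\in{}^n 2$ still occurs infinitely often outside $\mathrm{im}\,\ell_{m^*}$; one then sets the new bit $f(\ell_{m^*}(k))(n)=h(k)$ along that column and completes the extension elsewhere so that requirements (2) and (3) hold. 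The idea you were missing is that the decoding is not through a symmetry-invariant statistic of $G$ at all, but through the values of $G$ along a fixed, countably-supported scaffolding $\langle\ell_m\rangle$ chosen in $\mathcal{B}$ beforehand.
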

\begin{proof}
Work in $\mathcal{B}$. Fix a sequence $\left\langle \ell_{k}\mid k<\omega\right\rangle $
of injective functions from $\omega$ to $A$ with disjoint images. 
\begin{claim}
\label{claim: meeting a single function} For every $h\colon\omega\to\left\{ 0,1\right\} $
the following set is dense: 
\[
D_{h}=\left\{ f\in P\mid\exists n<num\left(f\right),m<\omega,\forall k<\omega,f\left(\ell_{m}\left(k\right)\right)(n)=h\left(k\right)\right\} .
\]
\end{claim}
\begin{proof}
Let $f^{\star}\in P$ and $n=num\left(f^{\star}\right)$. Let $T_{n}={}^{n}\{0,1\}$,
the set of all functions $t\colon n\to\left\{ 0,1\right\} $. Define
a function $F\colon\omega\to\mathcal{P}\left(T_{n}\right)$ as follows:
\[
F(k)=\left\{ t\in T_{n}\mid\exists a\in\im\ell_{k},f^{\star}\left(a\right)=t\right\} .
\]
Note that $F(k)=\im f^{\star}\circ\ell_{k}$, and hence it is never
empty. $F$ defines a partition of $\omega$ into finitely many pieces.
Hence at least one of them must be infinite. So there is $x\subseteq T_{n}$
and an infinite $Y\subseteq\omega$ such that for every $k\in Y$,
\[
F\left(k\right)=x.
\]
Which means, for every $t\in x$ and $k\in Y$ there is $a\in\im\ell_{k}$
such that 
\[
f^{\star}(a)=t.
\]
Let $k^{\star}=\min Y$. Note that for any $t\in T_{n}$ there are
infinitely many $a\in A\setminus\im\ell_{k^{\star}}$ such that $f^{\star}(a)=t$.
Extend $f^{\star}$ to a condition $f\in P_{n+1}$ as follows: for
all $a\in\im\ell_{k^{\star}}$, $f(a)(n)=h\left(\ell_{k}^{-1}(a)\right)$.
For elements in $A\setminus\im\ell_{k^{\star}}$ define $f$ such
that requirement \ref{requirement:big community} in Definition \ref{definition: Pn}
will be satisfied. This is possible, since for every $t\in T_{n}$
there are infinitely many members of $A$ which are not in $\im\ell_{k^{\star}}$
such that $f^{\star}(a)=t$.

%            Note that for every $t\colon n\to\left\{ 0,1\right\} $ there are
%            infinitely many $a\in A\setminus\im \ell_{k^{\star}}$ with $f^{\star}\left(a\right)=t$.
%            Just if $t\notin x$, then there is no $a\in\im \ell_{k^{\star}}$ with
%            $f^{\star}\left(a\right)=t$. If $t\in x$, then for every $k\in Y$
%            there will be a $a\in\im \ell_{k}$ with $f^{\star}\left(a\right)=t$.
%            $Y$ is infinite, and so, $Y\setminus\left\{ k^{\star}\right\} $
%            is infinite. Which means that there are infinitely many $a\in A\setminus\im \ell_{k}$
%            with $f^{\star}\left(a\right)=t$. So it is easy to extend now $f^{\star}$
%            to $f$ outside $\im \ell_{k^{\star}}$ and preserve the property that:
%            for every $s\colon n+1\to\left\{ 0,1\right\} $ there are infinitely
%            many $a$ with $f\left(a\right)=s$. Clearly such defined $f$ will
%            be in $D_{h}$. So we are done.
\end{proof}
So, $G$ intersects each $D_{h}$.

Define $\varTheta\colon\omega\times\omega\rightarrow^{{\rm {onto}}}\left(\left\{ h\colon\omega\to\left\{ 0,1\right\} \right\} \right)^{\mathcal{B}}$,
as follows: $\varTheta\left(\left(n,m\right)\right)=h$ if and only
if $\exists f\in G\cap P_{n+1}$, $\forall k\in\omega,f\left(\ell_{m}\left(k\right)\right)\left(n\right)=h\left(k\right)$. 
\end{proof}
The following lemma follows from the proof of claim \ref{claim: meeting a single function}. 
\begin{lem}
\label{lem:to the next}For all $n\in\omega$ and $f^{\star}\in P_{n}$
there is $f\in P_{n+1}$ such that $f$ is stronger than $f^{\star}$. 
\end{lem}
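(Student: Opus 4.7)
The plan is to extend $f^\star$ by a single coordinate: set $f(a) \supseteq f^\star(a)$ with $\dom f(a) = n+1$ and choose $f(a)(n) \in \{0,1\}$ for every $a \in A$. With this setup, the pointwise ordering gives $f \le f^\star$ immediately and clause (1) of Definition \ref{definition: Pn} is automatic; all the work lies in preserving clauses (2) and (3).

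My approach, borrowed from the proof of Claim \ref{claim: meeting a single function}, is to partition $A$ according to the value of $f^\star$. For each $t \in {}^n\{0,1\}$ let $B_t = \{a \in A : f^\star(a) = t\}$. By clause (3) for $f^\star$ each $B_t$ is infinite, and since ${}^n\{0,1\}$ is finite these pieces partition $A$. I would split each $B_t$ into two infinite subsets $B_t^0, B_t^1$ and declare $f(a)(n) = i$ on $B_t^i$. Every $s \in {}^{n+1}\{0,1\}$ has the form $s = t^\frown i$, so $\{a : f(a) = s\} = B_t^i$ is infinite, which gives clause (3) for $f \in P_{n+1}$.

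The one delicate point, which I expect to be the main obstacle (albeit a mild one), is clause (2): the set $f^{-1}(I) = (f^\star)^{-1}(I) \cup \{a : f(a)(n) = 0\}$ must stay countable. For any $t$ which is not the constant $1$ function, one has $B_t \subseteq (f^\star)^{-1}(I)$ already, which is countable, so an arbitrary split into two infinite halves contributes only countably many new atoms to $I$. For the one remaining $t$, the constant $1$ function on $n$, $B_t = A \setminus (f^\star)^{-1}(I)$ is uncountable; here I would pick $B_t^0$ to be any countably infinite subset of $B_t$ and let $B_t^1 = B_t \setminus B_t^0$. The set of atoms newly placed in $I$ is then a finite union of countable sets, hence countable. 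Finally, since $B_t^0$ can be chosen with countable support, the resulting $f$ is symmetric and lies in $\mathcal{B}$, completing the construction.
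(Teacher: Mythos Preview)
Your argument is correct and is essentially the approach the paper has in mind: the paper does not give a separate proof but simply notes that the lemma follows from the proof of Claim~\ref{claim: meeting a single function}, whose construction is exactly the ``partition $A$ by the $f^\star$-value and split each block'' idea you spell out. If anything, your write-up is more careful than the paper's sketch, since you make explicit why clause~(2) (countability of $f^{-1}(I)$) survives by handling the constant-$1$ block separately.
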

\begin{lem}
\label{lem:order dense set}For all $n\in\omega$ the set $D_{n}$
is dense set in $P$. 
\end{lem}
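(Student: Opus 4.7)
The plan is to reduce this immediately to Lemma \ref{lem:to the next} by a finite induction on the ``height'' of a given condition. Concretely, given any $f^\star \in P$, let $k = num(f^\star)$, so that $f^\star \in P_k$. If $k \geq n$, then $f^\star$ itself already lies in $D_n$ and there is nothing to prove.

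Otherwise $k < n$, and I would iterate Lemma \ref{lem:to the next} finitely many times. First, apply that lemma to $f^\star \in P_k$ to obtain some $f_1 \in P_{k+1}$ with $f_1 \leq f^\star$. Then apply it again to $f_1$ to get $f_2 \in P_{k+2}$ with $f_2 \leq f_1$, and so on. After $n-k$ such steps, I arrive at some $f_{n-k} \in P_n$ with $f_{n-k} \leq f^\star$ by transitivity of the ordering on $P$. Since $P_n \subseteq D_n$, this yields a condition in $D_n$ extending $f^\star$, which is exactly density.

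There is no real obstacle here: Lemma \ref{lem:to the next} does all the combinatorial work (finding an extension that satisfies the partition/frequency requirement \ref{requirement:big community} and keeps the set $f^{-1}(I)$ in $P_{\aleph_1}(A)$), and the present lemma simply packages that one-step extendability into a statement about arbitrary target heights. The only thing to verify at each step is that the newly produced $f_{i+1}$ is genuinely a condition in $P_{k+i+1}$ and remains stronger than $f^\star$, both of which are immediate from the statement of Lemma \ref{lem:to the next} and transitivity of $\leq$ in $P$.
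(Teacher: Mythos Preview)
Your proposal is correct and essentially matches the paper's argument: both reduce to iterated application of Lemma~\ref{lem:to the next}. The paper phrases this as an induction on $n$ (using the induction hypothesis to land in $D_n$ and then one application of Lemma~\ref{lem:to the next} to reach $D_{n+1}$), while you fix $n$ and iterate $n-k$ times from $f^\star\in P_k$; this is a purely cosmetic difference.
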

\begin{proof}
We prove the lemma by induction on $n$.

For $n=0$ the claim is true by the definition of $D_{0}=P$.

We assume the validity of the claim for $n$. Let $f^{\star}\in P$,
by the induction hypothesis there exists $f^{+}\in D_{n}$ such that
$f^{+}$ is stronger than $f^{\star}$ or equal to $f^{\star}$.

If $num\left(f^{+}\right)>n$ we define $f=f^{+}$, and get that $f\in D_{n+1}$.

If $num\left(f^{+}\right)=n$ then by lemma \ref{lem:to the next}
there is a condition $f\in P_{n+1}$ stronger than $f^{+}$. Thus
$f\in D_{n+1}$ and $f$ is stronger than $f^{\star}$. 
\end{proof}
We conclude that $\forall a\in A,n\in\omega,\exists f\in G$ such
that $n\in\dom\,f\left(a\right)$.
\begin{lem}
\label{lem:no big for bet one plus}In $\mathcal{B}$, for all function
$\psi\colon P\to\left(2^{\aleph_{0}}\right)^{+}$, $\left|\im\,\psi\right|\leq2^{\aleph_{0}}$. 
\end{lem}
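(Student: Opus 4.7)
The plan is to leverage the symmetric structure of $\mathcal{B}$ and reduce the claim to an orbit count inside $\mathcal{B}$.

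Since $\psi\in\mathcal{B}$, pick a support $C\in P_{\aleph_1}(A)$ with $\mathcal{S}_C\subseteq sym_S(\psi)$. Because each $\pi\in\mathcal{S}$ fixes every ordinal pointwise, the identity $\pi(\psi)=\psi$ unfolds to $\psi(\pi(f))=\psi(f)$ for all $\pi\in\mathcal{S}_C$ and $f\in P$. Hence $\psi$ factors through $P/\mathcal{S}_C$, and it suffices to exhibit (inside $\mathcal{B}$) an injection $P/\mathcal{S}_C\hookrightarrow 2^{\aleph_0}$.

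Every $f\in P_n$ can be viewed as a function $A\to {}^n 2$ equal to the constant $\mathbf{1}_n$ off the countable set $D_f=f^{-1}(I)$, and $\pi\in\mathcal{S}_C$ acts by $\pi(f)=f\circ\pi^{-1}$. A standard argument in $V$ --- splice, for each $t\in {}^n 2$, bijections between preimage classes in $A\setminus C$ using $AC$ in $V$ --- gives that $f,f'\in P_n$ lie in a common $\mathcal{S}_C$-orbit iff $f\!\upharpoonright\! C=f'\!\upharpoonright\! C$ and, for every $t\in {}^n 2$,
\[
|\{a\in A\setminus C\mid f(a)=t\}|=|\{a\in A\setminus C\mid f'(a)=t\}|.
\]

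Define $\mathrm{inv}(f)=\langle n,\ f\!\upharpoonright\! C,\ (\mu_t)_{t\ne\mathbf{1}_n}\rangle$ with $\mu_t=|\{a\in A\setminus C\mid f(a)=t\}|$. For $t\ne\mathbf{1}_n$ this set is contained in the countable (hence in $\mathcal{B}$ well-orderable) set $D_f$, so $\mu_t\in\omega+1$. The dropped coordinate $\mu_{\mathbf{1}_n}$ takes only one value: inside $\mathcal{B}$ one checks $|A\setminus X|=|A\setminus Y|$ for any countable $X,Y\in\mathcal{B}$, witnessed by a permutation of $A$ of countable support swapping $X$ and $Y$. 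Fix once and for all an enumeration $\iota\colon\omega\to C$; since each $\pi\in\mathcal{S}_C$ fixes $C$ pointwise, $\iota$ has support $C$ and hence lies in $\mathcal{B}$, and through $\iota$ the restriction $f\!\upharpoonright\! C$ is encoded as a member of $({}^n 2)^\omega$. A routine check shows $\mathrm{inv}$ has support $C$, hence lies in $\mathcal{B}$. Its range sits inside a $\mathcal{B}$-definable set which injects into $\omega\times 2^{\aleph_0}\times\aleph_0\cong 2^{\aleph_0}$, yielding the required injection $\im\psi\hookrightarrow 2^{\aleph_0}$ upon composing with the factorization of $\psi$ through $\mathrm{inv}$.

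The main obstacle is internal book-keeping: one must verify that the invariant map, the enumeration $\iota$, and the coding of tuples into reals all lie in $\mathcal{B}$ --- which reduces to checking that each has support contained in $C$. By contrast, the bijections realizing an orbit equivalence need only exist in $V$, where $AC$ is available, since they enter the argument only through the $\mathcal{S}_C$-invariance of $\psi$.
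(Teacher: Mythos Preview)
Your proposal is correct and follows essentially the same route as the paper: fix a countable support $C$ for $\psi$, record the invariant $(f\!\upharpoonright\!C,\ \text{fiber multiplicities over }A\setminus C\text{ for values in }I)$, observe that any two conditions with the same invariant are $\mathcal{S}_C$-conjugate (by splicing bijections of fibers in $V$), and conclude that $\psi$ factors through a set of size at most $2^{\aleph_0}$. Your treatment is slightly more explicit than the paper's about verifying that the invariant map and the enumeration $\iota$ lie in $\mathcal{B}$, but the underlying argument is the same.
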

\begin{proof}
Let $C\in St\left(\psi\right)$.

We define a function 
\[
\vartheta_{0}\colon P\to\left\{ h_{0}\colon C\to Q\right\} 
\]
by 
\[
\vartheta_{0}\left(f\right)\left(c\right)\left(n\right)=f\left(c\right)\left(n\right).
\]

We define 
\[
\vartheta_{1}\colon P\to\left\{ h_{1}\colon I\to\omega\cup\{\aleph_{0}\}\right\} 
\]
by 
\[
\vartheta_{1}\left(f\right)\left(i\right)=\left|\left\{ a\in A\setminus C\mid f\left(a\right)=i\right\} \right|.
\]

We define 

\[
\vartheta\colon P\to\left\{ h_{0}\colon C\to Q\right\} \times\left\{ h_{1}\colon I\to\omega\cup\{\aleph_{0}\}\right\} 
\]
by 
\[
\vartheta\left(f\right)=\left(\vartheta_{0}\left(f\right),\vartheta_{1}\left(f\right)\right).
\]
\begin{lem}
For all $f_{0},f_{1}\in P$ if $\vartheta\left(f_{0}\right)=\vartheta\left(f_{1}\right)$
then $\psi\left(f_{0}\right)=\psi\left(f_{1}\right)$. 
\end{lem}
\begin{proof}
We define 
\[
\Delta_{0}=\left\{ a_{0}\in A\setminus C\mid f_{0}(a_{0})\in I\right\} 
\]
and 
\[
\Delta_{1}=\left\{ a_{1}\in A\setminus C\mid f_{1}(a_{1})\in I\right\} .
\]
$\Delta_{0},\Delta_{1}$ are finite or countable and for all $i\in I$,
\[
\left|\left\{ a_{0}\in\Delta_{0}\mid f_{0}(a_{0})=i\right\} \right|=\left|\left\{ a_{1}\in\Delta_{1}\mid f_{1}(a_{1})=i\right\} \right|
\]
since $\vartheta_{1}(f_{0})=\vartheta_{1}(f_{1})$. Let $D\subseteq A\setminus(C\cup\Delta_{0}\cup\Delta_{1})$
be countable. Let us define a permutation of $D$, $\varTheta$, such
that for all $a\in D$, 
\[
f_{0}\left(a\right)=f_{1}\left(\varTheta\left(a\right)\right).
\]

Let us extend $\varTheta$ to a bijection $\Lambda\colon A\to A$
by defining $\Lambda(a)=a$ for all $a\notin D$.

In particular, for all $a\in C$, $\varLambda\left(a\right)=a$. Therefore:
\[
\varLambda\left(\psi\right)=\psi.
\]

\[
\varLambda\left(f_{0}\right)=f_{1}.
\]

and 
\[
\psi\left(f_{1}\right)=\Lambda\left(\psi\right)\left(\varLambda\left(f_{0}\right)\right)=\varLambda\left(\psi\left(f_{0}\right)\right)=\psi\left(f_{0}\right).
\]
\end{proof}
We conclude that $|\im\psi|\leq|\left\{ h_{0}\colon C\to Q\right\} |\cdot|\left\{ h_{1}\colon I\to\omega+1\right\} |=2^{\aleph_{0}}$. 
\end{proof}
\begin{lem}
\label{lem:bet one plus of B is regular} In $\mathcal{B}$, for all
$P$-name $\varUpsilon$ and $f^{\star}\in P$ if 
\[
f^{\star}\Vdash\varUpsilon=\left\langle \alpha_{n}\mid n\in\omega\right\rangle ,\ \alpha_{n}\in\left(\left(2^{\aleph_{0}}\right)^{+}\right)^{\mathcal{B}},
\]
then $f^{\star}\Vdash\bigcup_{n\in\omega}\alpha_{n}\in\left(\left(2^{\aleph_{0}}\right)^{+}\right)^{\mathcal{B}}$
. 

\end{lem}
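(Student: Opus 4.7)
The plan is to bound, inside $\mathcal{B}$, the collection of all ordinals that some extension of $f^\star$ forces to appear as a value of the sequence $\Upsilon$, showing this collection has cardinality at most $2^{\aleph_0}$ in $\mathcal{B}$. Writing $\kappa := ((2^{\aleph_0})^+)^{\mathcal{B}}$, this cardinal is regular in $\mathcal{B}$ (its cofinality cannot drop below its cofinality in $V$, which is $\kappa$ by AC in $V$ together with the fact that the reals and their well-orderings are pure sets hence lie in $\mathcal{B}$), so any such bound will be strict and will in particular bound $\bigcup_n \alpha_n$.

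To execute this, for each $n \in \omega$ I will define a function $\psi_n \colon P \to \kappa$ in $\mathcal{B}$ by setting $\psi_n(f) = \beta$ whenever $f \le f^\star$ and $f \Vdash \check\alpha_n = \check\beta$, and $\psi_n(f) = 0$ otherwise. Taking $C \in P_{\aleph_1}(A)$ to be a common support of $\Upsilon$ and $f^\star$, every $\pi \in \mathcal{S}_C$ fixes ordinals pointwise and preserves the forcing relation, so $\pi(\psi_n) = \psi_n$; hence $\psi_n$ is symmetric and, being built only from symmetric parameters, lies in $\mathcal{B}$. Lemma \ref{lem:no big for bet one plus} then yields $|\im \psi_n| \le 2^{\aleph_0}$.

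Next I will set $T_n = \{\beta < \kappa \mid \exists f \le f^\star,\ f \Vdash \check\alpha_n = \check\beta\} \subseteq \im \psi_n$, so $|T_n| \le 2^{\aleph_0}$, and $T = \bigcup_n T_n$. By density (every extension of $f^\star$ can be further refined to one deciding $\alpha_n$), we have $f^\star \Vdash \alpha_n \in \check T_n$ for every $n$, whence $f^\star \Vdash \bigcup_n \alpha_n \le \sup \check T$. Each $T_n$ carries a canonical well-ordering by its order type, which yields inside $\mathcal{B}$ an injection $T \hookrightarrow \omega \times 2^{\aleph_0}$ with no further choice needed — precisely because the $T_n$ are sets of ordinals. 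Therefore $|T|^{\mathcal{B}} \le 2^{\aleph_0}$, and regularity of $\kappa$ gives $\sup T < \kappa$, as required.

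The main obstacle I anticipate is verifying that $\psi_n$ really lies in $\mathcal{B}$: this amounts to showing that the relation ``$f \Vdash \check\alpha_n = \check\beta$'' is preserved by any $\pi \in S$ that fixes the parameters $\Upsilon$ and $f^\star$, together with the bookkeeping that the union of two supports in $P_{\aleph_1}(A)$ stays in $P_{\aleph_1}(A)$. Once this symmetry check is in place, the rest is a clean application of Lemma \ref{lem:no big for bet one plus} combined with the canonical well-orderability of sets of ordinals.
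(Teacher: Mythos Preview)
Your proposal is correct and follows essentially the same approach as the paper: define, for each $n$, a function from $P$ (or a subset of it) into $\kappa$ reading off the value forced for $\alpha_n$, invoke Lemma~\ref{lem:no big for bet one plus} to bound its image by $2^{\aleph_0}$, and then use the regularity of $\kappa$ in $\mathcal{B}$ to conclude. The only cosmetic differences are that the paper works with the partial function $\Psi_n$ on the deciding conditions rather than your total $\psi_n$ padded by $0$, and that the paper takes $\beta_n = \sup \im \Psi_n$ first and then $\sup_n \beta_n$, whereas you form $T = \bigcup_n T_n$ and take a single supremum; your extra care in verifying $\psi_n \in \mathcal{B}$ and in justifying the regularity of $\kappa$ is detail the paper leaves implicit.
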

\begin{proof}
For all $n\in\omega$ we define

\[
P^{\Upsilon_{n}}=\left\{ f\in P\mid\exists\alpha\in\left(\left(2^{\aleph_{0}}\right)^{+}\right)^{\mathcal{B}},f\Vdash\alpha_{n}=\check{\alpha}\right\} 
\]
and

\[
\Psi_{n}\colon P^{\Upsilon_{n}}\to\left(\left(2^{\aleph_{0}}\right)^{+}\right)^{\mathcal{B}}
\]
such that for all $f\in P^{\Upsilon_{n}}$, 
\[
f\Vdash\alpha_{n}=\varPsi_{n}\left(f\right).
\]

By lemma \ref{lem:no big for bet one plus} $\left|\im\varPsi_{n}\right|\leq2^{\aleph_{0}}$
(in $\mathcal{B}$), since by the regularity of $\left(2^{\aleph_{0}}\right)^{+}$,
\[
\alpha_{n}\leq\beta_{n}=\sup\im\varPsi_{n}<\left(2^{\aleph_{0}}\right)^{+}.
\]

and 
\[
\bigcup_{n\in\omega}\beta_{n}<\left(2^{\aleph_{0}}\right)^{+}.
\]
\end{proof}
\begin{lem}
\label{lem:bet one plus until regular}$\aleph_{1}^{\mathcal{B}\left[G\right]}=\left(\left(2^{\aleph_{0}}\right)^{+}\right)^{\mathcal{B}}$
and it is a regular cardinal in the generic extension. 
\end{lem}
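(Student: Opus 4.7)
The plan is to derive the lemma directly from the two preceding lemmas, which already carry all the combinatorial content: Lemma \ref{lem:colaps bet one} will give the lower bound on $\aleph_1^{\mathcal{B}[G]}$, and Lemma \ref{lem:bet one plus of B is regular} will supply both the matching upper bound and regularity. Before anything else I would record the auxiliary observation that $\mathbb{R}^{\mathcal{B}} = \mathbb{R}^{V}$: any real has no atoms in its transitive closure, is therefore fixed by every $\pi \in \mathcal{S}$, and is consequently hereditarily symmetric; since a well-order of $\mathbb{R}^{V}$ (available because $V \models ZFA+AC$) is itself atom-free, it also belongs to $\mathcal{B}$, making $(2^{\aleph_0})^{\mathcal{B}}$ a genuine aleph and $((2^{\aleph_0})^+)^{\mathcal{B}}$ the immediate successor aleph above it in $\mathcal{B}$.

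For the lower bound $\aleph_1^{\mathcal{B}[G]} \ge ((2^{\aleph_0})^+)^{\mathcal{B}}$, every ordinal $\alpha < ((2^{\aleph_0})^+)^{\mathcal{B}}$ admits, inside $\mathcal{B}$, an injection into $\mathbb{R}^{\mathcal{B}}$. Composing this injection with the surjection $\varTheta \colon \omega \times \omega \to \mathbb{R}^{\mathcal{B}}$ produced by Lemma \ref{lem:colaps bet one} yields a surjection from $\omega$ onto $\alpha$ in $\mathcal{B}[G]$, so every such $\alpha$ is countable there.

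For the matching upper bound I would argue by contradiction: any surjection $h \colon \omega \to ((2^{\aleph_0})^+)^{\mathcal{B}}$ in $\mathcal{B}[G]$ has a $P$-name $\Upsilon$ in $\mathcal{B}$ which some $f^\star \in G$ forces to be an $\omega$-sequence of ordinals strictly below $((2^{\aleph_0})^+)^{\mathcal{B}}$. Lemma \ref{lem:bet one plus of B is regular} then forces the union of this sequence to be strictly below $((2^{\aleph_0})^+)^{\mathcal{B}}$, contradicting surjectivity. Combined with the previous paragraph, this gives the identification $\aleph_1^{\mathcal{B}[G]} = ((2^{\aleph_0})^+)^{\mathcal{B}}$.

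Regularity is obtained by applying the same mechanism twice. Suppose in $\mathcal{B}[G]$ that $\aleph_1^{\mathcal{B}[G]} = \bigcup_{n<\omega} A_n$ with each $A_n$ countable. Each $A_n$, being a countable set of ordinals, is enumerable; applying Lemma \ref{lem:bet one plus of B is regular} to such an enumeration yields a bound $\gamma_n < \aleph_1^{\mathcal{B}[G]}$ for $A_n$, and a second application to $\langle \gamma_n \mid n < \omega \rangle$ bounds the entire union strictly below $\aleph_1^{\mathcal{B}[G]}$, contradicting $\bigcup_n A_n = \aleph_1^{\mathcal{B}[G]}$. The only delicate point in the entire argument, and the place where I would be most careful, is the bookkeeping that turns a set existing in $\mathcal{B}[G]$ into a $P$-name in $\mathcal{B}$ forced by some member of $G$, since this is what licenses the invocation of Lemma \ref{lem:bet one plus of B is regular}; once this bookkeeping is set up the derivation is mechanical, as all of the genuine combinatorial content has already been discharged in the two preceding lemmas.
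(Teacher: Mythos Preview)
Your proposal is correct and follows the paper's own approach: the paper's proof is just two lines, citing Lemma~\ref{lem:colaps bet one} for the inequality $\aleph_1^{\mathcal{B}[G]}\ge\left(\left(2^{\aleph_0}\right)^{+}\right)^{\mathcal{B}}$ and Lemma~\ref{lem:bet one plus of B is regular} for the reverse inequality together with regularity, and you simply unpack these two invocations. One minor simplification: your ``double application'' for regularity can be collapsed to a single one, since a singular $\aleph_1$ already yields a cofinal $\omega$-sequence of ordinals (take the suprema of your $A_n$'s, or an enumeration of a single $A_n$ if one of them is already unbounded), to which Lemma~\ref{lem:bet one plus of B is regular} applies directly.
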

\begin{proof}
By lemma \ref{lem:colaps bet one}, $\aleph_{1}^{\mathcal{B}\left[G\right]}\geq\left(\left(2^{\aleph_{0}}\right)^{+}\right)^{\mathcal{B}}$.

By lemma \ref{lem:bet one plus of B is regular}, $\aleph_{1}^{\mathcal{B}\left[G\right]}\leq\left(\left(2^{\aleph_{0}}\right)^{+}\right)^{\mathcal{B}}$
is a regular cardinal in $\mathcal{B}\left[G\right]$. 
\end{proof}
\begin{lem}
\label{lem:zero on countable atoms} For all $C\in P_{\aleph_{1}}(A)$
the set $Z_{C}$ is dense. 
\end{lem}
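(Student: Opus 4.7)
Fix $f^{\star}\in P$ with $n=num(f^{\star})$, and partition $C$ as $C_{0}'\sqcup C_{1}$ with $C_{1}=\{a\in C:f^{\star}(a)\notin I\}$. Since $\dom f^{\star}(a)=n$, the condition $f^{\star}(a)\notin I$ forces $f^{\star}(a)=\mathbf{1}_{n}$, the constant $1$ function on $n$. If $C_{1}=\emptyset$ then $f^{\star}\in Z_{C}$ already. Otherwise the plan is to construct an extension $f\in P_{n+1}$ of $f^{\star}$ lying in $Z_{C}$ by choosing a new bit $f(a)(n)\in\{0,1\}$ at each $a\in A$.

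Write $B_{t}=(f^{\star})^{-1}(t)$ for $t\in T_{n}$. The structural observation I will use is that, because $f^{\star}\in\mathcal{B}$ admits a support $S_{0}\in P_{\aleph_{1}}(A)$ and $\mathcal{S}_{S_{0}}$ acts transitively on $A\setminus S_{0}$, the function $f^{\star}$ is constant on $A\setminus S_{0}$; requirement~(2) in Definition~\ref{definition: Pn} then forces this constant value to lie outside $I$, so it must equal $\mathbf{1}_{n}$. Hence $B_{\mathbf{1}_{n}}\supseteq A\setminus S_{0}$ is cocountable in $A$, whereas for every $t\in T_{n}\setminus\{\mathbf{1}_{n}\}$ the set $B_{t}\subseteq(f^{\star})^{-1}(I)\subseteq S_{0}$ is countable and infinite by requirement~(3), hence countably infinite.

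Choose a countably infinite $\sigma\subseteq A\setminus(S_{0}\cup C)$, which exists because the complement is cocountable; note $\sigma\subseteq B_{\mathbf{1}_{n}}\setminus C$. For each $t\neq\mathbf{1}_{n}$ use an enumeration of the countable set $B_{t}$ to split $B_{t}=B_{t}^{0}\sqcup B_{t}^{1}$ into two countably infinite halves. Declare $f(a)(n)=0$ exactly when $a\in(C_{1}\cup\sigma)\cup\bigcup_{t\neq\mathbf{1}_{n}}B_{t}^{0}$, and $f(a)(n)=1$ otherwise. Then $f\leq f^{\star}$, and $f\in Z_{C}$: indeed $a\in C_{1}$ forces $f(a)(n)=0$ so $f(a)\in I$, while for $a\in C_{0}'$ we already have $f^{\star}(a)\in I$ and hence $f(a)\in I$.

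Requirement~(2) holds for $f$ because $f^{-1}(I)=(f^{\star})^{-1}(I)\cup C_{1}\cup\sigma$ is a union of three countable sets; requirement~(3) holds because the fiber $\{a:f(a)=t\cup\{(n,s)\}\}$ is $B_{t}^{s}$ (infinite by construction) when $t\neq\mathbf{1}_{n}$, equals $C_{1}\cup\sigma$ (infinite via $\sigma$) when $(t,s)=(\mathbf{1}_{n},0)$, and equals $B_{\mathbf{1}_{n}}\setminus(C_{1}\cup\sigma)$ (cocountable, hence infinite) when $(t,s)=(\mathbf{1}_{n},1)$. The main obstacle is to verify $f\in\mathcal{B}$: setting $K=S_{0}\cup C\cup\sigma\in P_{\aleph_{1}}(A)$, one checks that $f$ is constantly $\mathbf{1}_{n+1}$ on $A\setminus K$, and therefore $\mathcal{S}_{K}\subseteq sym_{S}(f)$, so $f$ is hereditarily symmetric and lies in $\mathcal{B}$.
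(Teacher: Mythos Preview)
Your proof is correct and follows essentially the same route as the paper: extend $f^{\star}$ by one level to some $f\in P_{n+1}$, setting the new bit to $0$ on the atoms of $C$ whose $f^{\star}$-value is $\mathbf{1}_{n}$ (your $C_{1}$, the paper's $E$), and assigning the remaining new bits so that requirement~(3) survives via a splitting of each fibre into two infinite pieces.

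Two minor remarks. First, your structural observation about the support $S_{0}$ is correct but not needed: the facts you extract from it (that $B_{\mathbf{1}_{n}}$ is cocountable and each $B_{t}$ with $t\neq\mathbf{1}_{n}$ is countably infinite) follow directly from requirements~(2) and~(3) of Definition~\ref{definition: Pn}, since $B_{\mathbf{1}_{n}}=A\setminus(f^{\star})^{-1}(I)$ and $B_{t}\subseteq(f^{\star})^{-1}(I)$ for $t\in I$. Second, your final paragraph verifying $f\in\mathcal{B}$ is a genuine addition over the paper's argument, which constructs $f$ without explicitly checking hereditary symmetry; your check via the support $K=S_{0}\cup C\cup\sigma$ is the right way to close that gap.
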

\begin{proof}
Let $f^{\star}$ be a condition in $P$. Let us denote $n=num\left(f^{\star}\right)$
and 
\[
E=\left\{ a\in C\mid\forall k\in n,f^{\star}\left(a\right)\left(k\right)=1\right\} .
\]

For all $t:n\to\left\{ 0,1\right\} $ we choose $\ell_{t}\subseteq\left\{ a\in A\mid f^{\star}\left(a\right)=t\right\} $
such that $\left|\ell_{t}\right|=\aleph_{0}$, $\{a\in A\mid f^{\star}\left(a\right)=t\}\setminus\ell_{t}$
is infinite, and $\ell=\left(\bigcup_{t:n\to\left\{ 0,1\right\} }\ell_{t}\right)\cup E$.

We define $f\in P$ by: 
\[
f\left(a\right)\left(k\right)=\begin{cases}
f^{\star}\left(a\right)\left(k\right) & k\neq n\\
0 & k=n,a\in\ell\\
1 & k=n,a\notin\ell
\end{cases}
\]

For all $a\in A,\dom f^{\star}\left(a\right)\cap\left\{ n\right\} =\phi$
and by first line of the definition of $f$ we get that if $k\in\dom\,f^{\star}\left(a\right)$then
($k\neq n$) $f\left(a\right)\left(k\right)=f^{\star}\left(a\right)\left(k\right)$
therefore $f$ is stronger than $f^{\star}$.

For all $a\in C$ if $a\notin E$ then $\exists k\in n$ ($k\neq n$)
such that $f\left(a\right)\left(k\right)=f^{\star}\left(a\right)\left(k\right)=0$

and if $a\in E$ then $f\left(a\right)\left(n\right)=0$. 
\end{proof}
\begin{thm}
$\forall a,b\in A,a\neq b\Rightarrow\exists f\in G,n\in\omega,f\left(a\right)\left(n\right)=0,f\left(b\right)\left(n\right)=1$. 
\end{thm}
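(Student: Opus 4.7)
The plan is to fix $a \neq b$ in $A$ and establish density in $P$ of the set
\[
D_{a,b} = \{f \in P \mid \exists n \in \omega,\ f(a)(n) = 0 \text{ and } f(b)(n) = 1\}.
\]
Every permutation in $\mathcal{S}_{\{a,b\}}$ preserves $D_{a,b}$, and $\{a,b\} \in P_{\aleph_1}(A)$, so $D_{a,b} \in \mathcal{B}$. Once $D_{a,b}$ is shown to be dense, genericity of $G$ yields an $f \in G \cap D_{a,b}$, which is exactly the witness the theorem asks for.

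To prove density, fix $f^{\star} \in P$, let $n = num(f^{\star})$, and set $t_a = f^{\star}(a)$, $t_b = f^{\star}(b) \in {}^n\{0,1\}$. For each $t \in {}^n\{0,1\}$ the set $A_t = \{c \in A \mid f^{\star}(c) = t\}$ is infinite by requirement 3 of Definition \ref{definition: Pn}. I partition each $A_t = A_t^0 \sqcup A_t^1$ with $A_t^0$ countably infinite and $A_t^1$ infinite, subject to $a \in A_{t_a}^0$ and $b \in A_{t_b}^1$. For $t$ other than the constant $1$ pattern, $A_t \subseteq (f^{\star})^{-1}(I)$ is countably infinite by condition 2, so the split into two countably infinite halves is routine; for the constant $1$ pattern, $A_t$ is the uncountable set $A \setminus (f^{\star})^{-1}(I)$ and contains countably infinite subsets in $\mathcal{B}$, so $A_t^0$ can be chosen countably infinite honoring the constraints. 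If $t_a = t_b$, I arrange $a$ and $b$ to land in opposite halves of $A_{t_a}$, which is possible because $A_{t_a}$ is infinite and contains both. Now define $f$ by $f(c)(k) = f^{\star}(c)(k)$ for $k < n$, and $f(c)(n) = 0$ if $c \in \bigcup_t A_t^0$, $f(c)(n) = 1$ otherwise.

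The verification that $f \in P_{n+1}$ and $f \leq f^{\star}$ is routine: the ordering and condition 1 are immediate; condition 3 holds because every pattern $t' \in {}^{n+1}\{0,1\}$ is realized by $A_{t' \restriction n}^{t'(n)}$, which is infinite by construction; and condition 2 holds because
\[
f^{-1}(I) \subseteq (f^{\star})^{-1}(I) \cup \bigcup_{t \in {}^n\{0,1\}} A_t^0,
\]
a finite union of countable sets. Since $f(a)(n) = 0$ and $f(b)(n) = 1$, we have $f \in D_{a,b} \cap P_{n+1}$, completing the density argument. The main technical point is preserving condition 2 while simultaneously forcing $f(a)(n) = 0$: only countably many atoms may receive value $0$ at the new coordinate, so each $A_t^0$ must be countably infinite with infinite complement in $A_t$ rather than an arbitrary infinite split, and the uncountable bulk of $A$ must be routed into the $1$-side at position $n$.
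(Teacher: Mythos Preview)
Your proof is correct and follows the same density strategy as the paper, but you do substantially more work in the extension step than necessary. The paper simply invokes Lemma~\ref{lem:to the next} to obtain some $f^{+}\in P_{n+1}$ extending $f^{\star}$, and then modifies $f^{+}$ at exactly the two atoms $a,b$ at coordinate $n$, setting $f(a)(n)=0$ and $f(b)(n)=1$. Changing the values of a condition at finitely many atoms trivially preserves all three clauses of Definition~\ref{definition: Pn}: clause~2 because $f^{-1}(I)\subseteq (f^{+})^{-1}(I)\cup\{a\}$ is still countable, and clause~3 because removing at most two atoms from each infinite fiber leaves it infinite. Your careful partition of every $A_t$ into countable and co-countable halves essentially re-proves Lemma~\ref{lem:to the next} with the constraint on $a,b$ baked in, which is fine but avoidable. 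On the other hand, you explicitly check that $D_{a,b}\in\mathcal{B}$ via its countable support $\{a,b\}$, a point the paper leaves implicit.
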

\begin{proof}
Let $f^{\star}\in P$ by definition exist $n\in\omega$ such that
$f^{\star}\in P_{n}$, we chose $f^{+}\in P_{n+1}$ such that $f^{+}$
stronger than $f^{\star}$(exist by lemma \ref{lem:to the next}).

We define $f:A\to Q$ by 
\[
f\left(c\right)\left(k\right)=\begin{cases}
f^{+}\left(c\right)\left(k\right) & c\in A-\left\{ a,b\right\} \vee k\neq n\\
0 & c=a\wedge k=n\\
1 & c=b\land k=n
\end{cases}
\]
\end{proof}
\begin{thm}
For all $n\in\omega$ the set $\left\{ a\in A:\exists f\in G,k\in n,f\left(a\right)\left(k\right)=0\right\} $
is a countable set in the ground model. 
\end{thm}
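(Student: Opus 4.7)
The plan is to produce, for each fixed $n\in\omega$, a single set $S\in\mathcal{B}$ which coincides in $\mathcal{B}[G]$ with the target set $T=\{a\in A:\exists f\in G,\ k\in n,\ f(a)(k)=0\}$ and is at most countable in $\mathcal{B}$. The engine is the density of $D_{n}$ proved in Lemma \ref{lem:order dense set}, together with the countability constraint $f^{-1}(I)\in P_{\aleph_{1}}(A)$ built into requirement 2 of Definition \ref{definition: Pn}.

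First, by Lemma \ref{lem:order dense set} the set $D_{n}=\bigcup_{k\ge n}P_{k}$ is dense in $P$, so by genericity I fix some $f\in G\cap D_{n}$; say $f\in P_{m}$ with $m\ge n$, so that $\dom f(a)=m$ for every $a\in A$. Since $f$ is an element of $P\in\mathcal{B}$, we have $f\in\mathcal{B}$, and consequently $S=\{a\in A:\exists k\in n,\ f(a)(k)=0\}$ lies in $\mathcal{B}$. Any $a\in S$ witnesses $f(a)\in I$, so $S\subseteq f^{-1}(I)$, and by requirement 2 of Definition \ref{definition: Pn} the latter is in $P_{\aleph_{1}}(A)$, giving $|S|\le\aleph_{0}$ in $\mathcal{B}$.

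What remains is to verify $S=T$. The inclusion $S\subseteq T$ is immediate from $f\in G$. For $T\subseteq S$, let $a\in T$ be witnessed by some $f'\in G$ and $k\in n$ with $f'(a)(k)=0$. Choose a common refinement $f''\in G$ of $f$ and $f'$; from $f''\le_{P}f$ and $f''\le_{P}f'$ we get $f''(a)\le_{Q}f(a)$ and $f''(a)\le_{Q}f'(a)$, and since $k<n\le m=\dom f(a)$ the coordinate $k$ lies in both domains, forcing $f(a)(k)=f''(a)(k)=f'(a)(k)=0$. Hence $a\in S$. As a bonus this argument shows $S$ does not depend on the chosen $f\in G\cap D_{n}$, as it must not.

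The only conceptual hazard is the standard one of distinguishing the generic $G$ (which is not in $\mathcal{B}$) from the individual conditions $f\in G$ (each of which is a genuine element of $\mathcal{B}$); the density of $D_{n}$ is invoked precisely to pick one such $f$ whose first $n$ coordinates already encode all the generic information relevant to the statement, after which both the countability bound and the ground-model membership reduce to properties of the single condition $f$. I do not anticipate any real obstacle beyond this piece of bookkeeping.
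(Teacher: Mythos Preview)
Your proof is correct and takes essentially the same approach as the paper's: use the density of $D_{n}$ to pick a condition $f\in G\cap D_{n}$, bound the resulting set $S$ via $S\subseteq f^{-1}(I)\in P_{\aleph_{1}}(A)$, and use compatibility of conditions in $G$ to identify $S$ with the target set. The only cosmetic difference is that you pass through an explicit common refinement $f''\in G$, while the paper argues directly from $f\parallel f^{\star}$ and $k\in\dom f^{\star}(a)$; these are the same observation.
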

\begin{proof}
By lemma \ref{lem:order dense set} the set $D_{n}$ is dense, then
exist $f^{\star}\in D_{n}\cup G$, by definition of the forcing $\left|\left\{ a\in A\mid\exists k\in n,f^{\star}\left(a\right)=0\right\} \right|=\aleph_{0}$.

Let $a\in A$ and $f\in G$ and if exist $k\in n$ such that $f\left(a\right)\left(k\right)=0$
then ($f\parallel f^{\star}$and $k\in\dom f\left(a\right)$) $f^{\star}\left(a\right)\left(k\right)=0$. 
\end{proof}
\begin{thm}
$A$ is not countable in the generic extension. 
\end{thm}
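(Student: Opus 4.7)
The plan is to argue by contradiction. Suppose $A$ is countable in $\mathcal{B}[G]$; then there exist a symmetric name $\dot\varphi\in\mathcal{B}$ and a condition $f^\star\in G$ such that $f^\star\Vdash\dot\varphi\colon\check\omega\to\check A$ is a surjection. Enlarging the support if necessary, I would fix a single countable $C\in P_{\aleph_1}(A)$ with $\mathcal{S}_C\subseteq sym_{\mathcal{S}}(\dot\varphi)\cap sym_{\mathcal{S}}(f^\star)$. Since $A$ is uncountable and $C$ is countable, $A\setminus C$ is still uncountable in $\mathcal{B}$.

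The main tool would be a symmetric swap lemma: for any $f\le f^\star$, any $m\in\omega$, and any $a\in A\setminus C$ with $f\Vdash\dot\varphi(\check m)=\check a$, I claim $a$ is the unique element of $A\setminus C$ in the fiber $f^{-1}(f(a))$. Indeed, if some $b\in(A\setminus C)\setminus\{a\}$ satisfied $f(b)=f(a)$, then the transposition $\pi=(a\ b)$ would lie in $\mathcal{S}_C$, would satisfy $\pi(f)=f$ (only two atoms sharing their $f$-value are moved), and would fix $\dot\varphi$, so applying $\pi$ to the forcing relation would give $f=\pi(f)\Vdash\dot\varphi(\check m)=\check b$, contradicting $a\ne b$. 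Because $A\setminus f^{-1}(I)$ is uncountable while $C$ is countable, this rules out $f(a)\notin I$, so $f(a)\in I$; and the assignment $a\mapsto f(a)$ on atoms of $A\setminus C$ decided by a single $f\in P_n$ injects into $I\cap{}^n\{0,1\}$, so any such $f$ decides at most $2^n-1$ atoms of $A\setminus C$.

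To close, I would use that by Lemma~\ref{lem:order dense set} and the fact that any two conditions of $G$ in the same $P_n$ must agree, the set $G\cap P_n$ is a singleton $\{f^{(n)}\}$ for every $n$. By the forced surjectivity of $\dot\varphi$, every $a\in A\setminus C$ is decided by some element of $G$, hence by some $f^{(n)}$; together with the preceding paragraph this places $A\setminus C$ inside a countable union of finite sets, so $|A\setminus C|^{\mathcal{B}[G]}\le\aleph_0$. The final contradiction would come from Lemma~\ref{lem:bet one plus until regular}: $((2^{\aleph_0})^+)^{\mathcal{B}}$ is preserved as the regular cardinal $\aleph_1^{\mathcal{B}[G]}$, and under the standing cardinality assumption $|A|^{\mathcal{B}}\ge((2^{\aleph_0})^+)^{\mathcal{B}}$ this forces $|A\setminus C|^{\mathcal{B}[G]}\ge\aleph_1^{\mathcal{B}[G]}$, incompatible with the bound just derived. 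The main technical subtlety will be to verify at the level of $P$-names that the transposition really induces $\pi(f)=f$ and $\pi(\dot\varphi)=\dot\varphi$, and then to carry out the cardinality preservation step cleanly.
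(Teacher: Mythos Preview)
Your swap lemma is correct and is essentially the same symmetry argument the paper uses. The problem is the final step: the contradiction you aim for is circular. You assume $A$ is countable in $\mathcal{B}[G]$, and what your injection $\Phi\colon A\setminus C\to\omega\times I$ shows is precisely that $A\setminus C$ is countable in $\mathcal{B}[G]$---which is entirely consistent with your hypothesis. To escape the circle you appeal to a ``standing cardinality assumption $|A|^{\mathcal{B}}\ge((2^{\aleph_0})^+)^{\mathcal{B}}$,'' but the paper assumes only $|A|>\aleph_0$ in $V$, and in fact no injection from any uncountable ordinal into $A$ exists in $\mathcal{B}$ (or in $\mathcal{B}[G]$): any such injection would have a countable support $C'$, and your own swap argument applied to it shows its image must lie in $C'$. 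So the lower bound $|A\setminus C|^{\mathcal{B}[G]}\ge\aleph_1^{\mathcal{B}[G]}$ is simply false, and Lemma~\ref{lem:bet one plus until regular} does not help here.

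The paper closes this gap by a different maneuver that you are missing: before running the swap argument it passes to a condition $f\in Z_C\cap G$, so that $C\subseteq X_f=f^{-1}(I)$. Now if some $f^+\le f$ decides $h(k)=a$ with $a\notin X_f$, then $f(a)$ is the all-ones string, and by requirement~(3) of Definition~\ref{definition: Pn} the fiber $S_{f^+,a}=\{b:f^+(b)=f^+(a)\}$ is infinite; every such $b$ also has $f(b)$ all-ones, hence $b\notin X_f\supseteq C$, so the transposition $(a\ b)$ lies in $\mathcal{S}_C$ and fixes $f^+$. This yields $f\Vdash\mathrm{Im}\,h\subseteq X_f$. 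The point is that $X_f$ is a \emph{fixed ground-model} countable set, so $X_f\subsetneq A$ is a set-theoretic (not cardinality) fact absolute between $V$, $\mathcal{B}$, and $\mathcal{B}[G]$; hence $h$ cannot be onto. Your argument, by contrast, produces for each $a$ a different witnessing condition $f^{(n_a)}$, and the resulting bound lives only in $\mathcal{B}[G]$, where you have no independent handle on $|A|$. The fix is to incorporate the $Z_C$ step so that a single condition bounds the entire image.
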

\begin{proof}
Let $f^{\star}$ be a condition in $P$ and let $h$ be a $P$-name such that $f^{\star}\Vdash h:\omega\to A$, and let $C\in St\left(h\right)$.

Let $G\subset P$ generic such that $f^{\star}\in G$. By lemma \ref{lem:zero on countable atoms}
there exists a condition $f\in Z_{C}\cap G$ such that $f$ is stronger than $f^{\star}$. 
\begin{claim}
$f\Vdash \mathrm{Im}\ h\subseteq X_{f}=\left\{ a\in A\mid\exists n\in\omega,\ f\left(a\right)\left(n\right)=0\right\}$. 
\end{claim}
\begin{proof}
Suppose otherwise. 

By the definition of $P$, $X_{f}$ is at most countable and in particular, $X_f \neq A$. By the assumption, there is $a\in A\setminus X_{f}$, $k,n\in\omega$ and
$f^{+}\in D_{n}$ stronger than $f$ such that $f^{+}\Vdash h\left(k\right)=a$.

By the definition of $P$ the set \[S_{f^{+},a}=\left\{ b\in A\mid f^{+}\left(b\right)=f^{+}\left(a\right)\right\}\]
is an infinite set.

We claim that $S_{f^{+},a}\cap X_{f}=\emptyset$. For all $b\in S_{f^{+},a}$ and for
all $\ell$ if $\ell\in num(f)$ then $f(b)(\ell)=f(a)(\ell)=1$ since $a\notin X_f$.
Therefore $f^{+}(b)(\ell)=1$ and for all $c\in X_{f}$
exists $\ell\in num\left(f\right)$ such that $f(c)(\ell)=0$.
Thus $f^{+}(c)(\ell)=0$. 

Since $C \subseteq X_f$, we conclude that $S_{f^{+},a}\cap C=\emptyset$.

We define $t\in \mathcal{S}$ by 
\[
t\left(c\right)=\begin{cases}
b & c=a\\
a & c=b\\
c & c\notin\left\{ a,b\right\} 
\end{cases}.
\]

%$A$ is uncountable, hence there is $a\in A\setminus X_{f}$. So, $f\Vdash a\notin Imh$
%and we are done.

$t$ is an automorphism of $V$ and since
\[
f^{+}\Vdash h\left(k\right)=a
\]
we get that
\[
t\left(f^{+}\right)\Vdash t\left(h\right)\left(t\left(k\right)\right)=t\left(a\right).
\]
Moreover, since 
\begin{itemize}
\item $t\left(f^{+}\right)=f^{+}$. 
\item $t\left(h\right)=h$. 
\item $t\left(k\right)=k$ 
\end{itemize}
we conclude that
\[
f^{+}\Vdash h\left(k\right)=b
\]
contradicting the fact that $h$ is a function.
\end{proof}
Working in $V$, we conclude that $\mathrm{Im}\ h$ is forced by $f^+$ to be a subset of the countable set $X_{f}$. In particular, $\mathrm{Im}\ h \neq A$.
\end{proof}

%\section{Now we would like to give model of $ZF$ which satisfies the conclusion}

By the general theory of $ZFA$, there is a model of $ZF$ with similar
properties (see \cite{JechSochorapplications,JechSochoronmodel}).
For completeness, let us describe a concrete way to obtain such a model
of $ZF$ in our case: 
\begin{defn}
In $\mathcal{B}\left[G\right]$ we define
\begin{itemize}
\item $\mathcal{C}_{0}=\emptyset$.
\item For a successor ordinal $\alpha = \beta + 1$, $\mathcal{C}_{\alpha}=\mathcal{P}(\mathcal{C}_{\beta})$.
\item For a limit ordinal $\alpha$, $\mathcal{C}_{\alpha}=\bigcup_{\beta\in\alpha}\mathcal{C}_{\beta}$.
\end{itemize}
Let \[\mathcal{C}=\left\{ x\in \mathcal{B}[G]\mid\exists\alpha\in Ord,x\in\mathcal{C}_{\alpha}\right\}.\] 
\end{defn}
\begin{thm}\label{thm:large set with regular}
It is consistent with $ZF$ that $\aleph_1$ is regular and there is a large set $A^\star\subseteq \mathbb{R}$ which is bounded and has a single condensation point.
\end{thm}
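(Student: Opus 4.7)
The plan is to exhibit, in $\mathcal{B}[G]$, an explicit large bounded subset of $\mathbb{R}$ with a unique condensation point, and then to transfer this situation to the pure class $\mathcal{C}$, which is a model of $ZF$. Identifying $\mathbb{R}$ with Cantor space ${}^\omega 2 \subseteq [0,1]$ via binary expansion, I will take
\[
A^\star = \{g_a \mid a \in A\}
\]
and claim that the constant sequence $\bar{1}$ is its unique condensation point. Boundedness of $A^\star \subseteq [0,1]$ is immediate.

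For $\bar{1}$ to be a condensation point, consider the basic neighborhoods $U_n = \{x \in {}^\omega 2 \mid x\upharpoonright n = \bar{1}\upharpoonright n\}$; an atom $a$ satisfies $g_a \notin U_n$ exactly when some $k < n$ gives $g_a(k) = 0$. By Lemma \ref{lem:order dense set} some $f \in G$ lies in $D_n$, and for such $f$ the bad set of atoms is contained in $f^{-1}(I) \in P_{\aleph_1}(A)$, hence is countable. Since $a \mapsto g_a$ is injective (by the penultimate theorem above), $A$ is uncountable in $\mathcal{B}[G]$ (by the final theorem above), and the complement of a countable subset of an uncountable set is uncountable already in $ZF$, the set $A^\star \cap U_n$ is large. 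For uniqueness, given $y \neq \bar{1}$ pick $n$ with $y(n) = 0$ and set $V = \{x \mid x(n) = 0\}$; the same density argument applied to $D_{n+1}$ gives some $f \in G$ with $\{a \mid g_a(n) = 0\} \subseteq f^{-1}(I)$, so $A^\star \cap V$ is countable and $y$ is not a condensation point.

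It remains to transfer these properties to $\mathcal{C}$. Each $g_a$ is a subset of $\omega$, so $A^\star$ is a pure set and $A^\star \in \mathcal{C}$; likewise $\mathbb{R}$ and the ordinals. Any would-be witness in $\mathcal{B}[G]$ to failure of largeness of $A^\star \cap U_n$, to the existence of a second condensation point, or to singularity of $\aleph_1^{\mathcal{B}[G]}$, would itself be a pure set (a function from $\omega$ into reals or ordinals) and would therefore already reside in $\mathcal{C}$. Hence the relevant properties descend from $\mathcal{B}[G]$ to $\mathcal{C}$, and by Lemma \ref{lem:bet one plus until regular} $\aleph_1$ remains regular in $\mathcal{C}$. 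The main obstacle I expect is articulating this final step with care—ensuring that the pure class faithfully reflects the cardinal arithmetic and the topology of the symmetric extension; this is the standard Jech--Sochor reduction invoked via \cite{JechSochorapplications,JechSochoronmodel}.
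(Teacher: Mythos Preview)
Your proposal is correct and follows essentially the same approach as the paper: define $A^\star=\{g_a\mid a\in A\}$, show that the constant $1$ sequence is its unique condensation point using the countability of $f^{-1}(I)$ for $f\in G\cap D_n$, and then pass to the pure part $\mathcal{C}$. Your write-up is in fact more explicit than the paper's---the paper simply asserts that $|A^\star\cap[0,r]|=\aleph_0$ for $r\in(0,1)$ and that $\mathcal{C}\models ZF$, whereas you unpack the density arguments and the descent to $\mathcal{C}$; both rely on the same lemmas and the Jech--Sochor transfer.
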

\begin{proof}
Let $A^\star = \{g_a \mid a \in A\}$. Note that $A^\star \in \mathcal{C}$.
	
$\mathcal{C}\models ZF$. Working in $\mathcal{C}$, there exists a set $A^{\star}\subseteq\mathbb{R}$
such that $A^{\star}\subseteq\left[0,1\right]$, $\left|A^{\star}\right|>\aleph_{0}$,
and for all $r\in\mathbb{R}$ if $r\in\left(0,1\right)$ then $\left|A^{\star}\cap [0,r]\right|=\aleph_{0}$.

Thus, $A^\star\subseteq\mathbb{R}$ is bounded and large and the point $1$
is the unique condensation point of $A^\star$.

By lemma \ref{lem:bet one plus until regular}, $\left(\aleph_{1}\right)^{\mathcal{C}}=\left(\aleph_{1}\right)^{\mathcal{B}\left[G\right]}=\left(\left(2^{\aleph_{0}}\right)^{+}\right)^{\mathcal{B}}$
is a regular cardinal in $\mathcal{B}[G]$ and therefore also in $\mathcal{C}$. 
\end{proof}
\begin{corollary*}
It is consistent that exist some set $A\subseteq \mathbb{R}$ with unique condensation point
and any set $A\subseteq \mathbb{R}$ with unique condensation point not have a well order.
\end{corollary*}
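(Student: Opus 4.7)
The plan is to work entirely inside the model $\mathcal{C}$ already constructed in Theorem~\ref{thm:large set with regular}, where $\aleph_1$ is regular and the large bounded set $A^\star \subseteq \mathbb{R}$ has a unique condensation point. The existence half of the corollary is then witnessed immediately by $A^\star$, so nothing further is needed there.

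For the non-well-orderability half, I would argue by contradiction. Suppose, working in $\mathcal{C}$, that some well-orderable $A \subseteq \mathbb{R}$ has a unique condensation point $c$. The first step is to reduce to the bounded case by passing to $B = A \cap [c-1, c+1]$. This $B$ inherits a well-ordering from $A$, is bounded by definition, and is large because $c$ is a condensation point of $A$, so $(c-1, c+1) \cap A$ is large. The unique condensation point of $B$ is still $c$: any condensation point of $B$ is also a condensation point of $A$ since $B \subseteq A$, hence it must equal $c$; conversely $c$ remains a condensation point of $B$ because every neighborhood of $c$ of radius less than $1$ is contained in $[c-1, c+1]$ and meets $A$ and $B$ in the same large set.

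Now I can apply Lemma~\ref{lem: singular aleph1} to $B$ to conclude that $\mathrm{cf}(\aleph_1) = \aleph_0$. This contradicts regularity of $\aleph_1$ in $\mathcal{C}$, which was established in Lemma~\ref{lem:bet one plus until regular} (and transferred to $\mathcal{C}$ in the proof of Theorem~\ref{thm:large set with regular}). This completes the argument.

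The only subtle point is the bounded reduction step, and specifically the verification that $B$ retains $c$ as its \emph{unique} condensation point while remaining large; once those are checked the contradiction is immediate from previously established results, so I do not expect a serious obstacle.
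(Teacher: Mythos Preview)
Your argument is correct and follows essentially the same route as the paper: the paper invokes Theorem~\ref{thm:well orderable large set to singular} (whose relevant direction is precisely Lemma~\ref{lem: singular aleph1}) to conclude that a well-orderable set with a unique condensation point forces $\aleph_1$ to be singular, contradicting the regularity established in the model $\mathcal{C}$. Your version is in fact slightly more careful, since you explicitly perform the bounded reduction $B = A \cap [c-1,c+1]$ needed to meet the hypotheses of Lemma~\ref{lem: singular aleph1}, a step the paper leaves implicit.
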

\begin{proof}
By theorem \ref{thm:large set with regular} it is consistent that exist some set $A\subseteq \mathbb{R}$ with unique condensation point and $\aleph_1$ is regular and by theorem \ref{thm:well orderable large set to singular} if $\aleph_1$ is regular then any set $A\subseteq \mathbb{R}$ with unique condensation point not have a well order.
\end{proof}

\providecommand{\bysame}{\leavevmode\hbox to3em{\hrulefill}\thinspace}
\providecommand{\MR}{\relax\ifhmode\unskip\space\fi MR }
% \MRhref is called by the amsart/book/proc definition of \MR.
\providecommand{\MRhref}[2]{%
	\href{http://www.ams.org/mathscinet-getitem?mr=#1}{#2}
}
\providecommand{\href}[2]{#2}

%\begin{thebibliography}{9}
%	\bibitem{FefermanLevy1963}
%	book that helps
%\end{thebibliography}

\end{document}